\newtheorem{theorem}{Theorem}[section]
\newtheorem{lemma}[theorem]{Lemma}
\theoremstyle{definition}
\newtheorem{corollary}[theorem]{Corollary}
\theoremstyle{remark}
\newtheorem{remark}[theorem]{Remark}
\numberwithin{equation}{section}
\begin{document}

\title[An iterative scheme for solving NOE]{An iterative scheme for solving equations
with locally $\sigma$-inverse monotone operators}

\author{N. S. Hoang}

\address{Mathematics Department, Kansas State University,
Manhattan, KS 66506-2602, USA}

\email{nguyenhs@math.ksu.edu}

%

\subjclass[2000]{Primary  65R30; Secondary 47J05, 47J06, 47J35}

\date{}

\keywords{iterative methods,
nonlinear operator equations, monotone operators,
discrepancy principle.}

\begin{abstract}
An iterative scheme for solving ill-posed
nonlinear equations with locally $\sigma$-inverse monotone operators is studied in this paper.
A stopping rule of discrepancy type is proposed.  
The existence of $u_{n_\delta}$ satisfying the proposed stopping rule is proved. 
The convergence of this element to the minimal-norm solution is justified
mathematically. 
\end{abstract}

\maketitle

\section{Introduction}

In this paper we study an iterative scheme 
for solving the equation
\begin{equation}
\label{aeq1}
F(u)=f,
\end{equation}
 where
$F$ is a locally $\sigma$-inverse monotone operator in a 
Hilbert space $H$, and equation \eqref{aeq1} is assumed solvable, 
possibly nonuniquely. 
An operator $F$ is called locally $\sigma$-inverse monotone if 
for any $R>0$ there exists a constant $\sigma_R>0$ such that
\begin{equation}
\label{ceq2}
\langle F(u)-F(v),u-v\rangle\ge \sigma_R\|F(u)-F(v)\|^2,\qquad \forall u,v\in B(0,R)\subset H.
\end{equation}
Here, $\langle \cdot,\cdot\rangle$ denotes the inner product in $H$.
If the constant $\sigma_R$ in \eqref{ceq2} is independent of $R$ then we call
$F$ a $\sigma$-inverse monotone operator. 

A necessary condition for an operator $F$ to be
$\sigma$-inverse
monotone is the following:
$$
\|F(u)-F(v)\|\leq \sigma^{-1}\|u-v\|.
$$
Indeed, inequality \eqref{ceq2} and the Cauchy inequality imply the
above estimate.
If the $\sigma$-inverse monotone operator is a homeomorphism,
then
its inverse is strongly monotone:
$$
\langle F^{-1}(u)-F^{-1}(v),u-v \rangle\geq \sigma \|u-v\|^2.
$$
An example of $\sigma$-inverse operator is a linear
selfadjoint compact nonnegative-definite operator $A$.
Indeed, if $\lambda_1\geq \lambda_2\geq....\geq 0$ are its
eigenvalues and
$\phi_j$ are the 
corresponding normalized eigenvectors, then
$$
\langle Au-Av,u-v \rangle=\sum_{j}\lambda_j|\langle u-v,\phi_j\rangle |^2\geq
\sigma \sum_{j}\lambda_j^2|\langle u-v,\phi_j\rangle |^2=\sigma
\|Au-Av\|^2,
$$
where $\sigma=\lambda_1^{-1}$. 
An example of locally $\sigma$-inverse monotone operator is
a nonlinear Fr\'{e}chet differentiable monotone operator $F:H\to H$ provided that $H$ is a complex Hilbert space
and $F'$ is locally bounded, i.e., for any $R>0$ there exists a constant $M(R)$ 
such that 
$$
\|F'(u)\| \le M(R),\qquad \forall u\in B(0,R) 
$$
(see Lemma~\ref{lemma2.10} in Section 2). 
In Lemma~\ref{lemma2.10} we also prove that 
if $H$ is a real Hilbert space, $F:H\to H$ is a Fr\'{e}chet differentiable 
monotone operator and $F'$
is a selfadjoint locally bounded operator, then $F$ is also a locally $\sigma$-inverse monotone operator. 
If \eqref{ceq2} holds, then the operator $\sigma_R F$ satisfies \eqref{ceq2} with $\sigma_R = 1$. 

It is clear that if $F$ is $\sigma$-inverse monotone, then it is monotone, i.e.,
\begin{equation}
\label{ceq2.1}
\langle F(u)-F(v),u-v\rangle\ge 0,\qquad \forall u,v\in H.
\end{equation}
It is known (see, e.g., \cite{R499}), that the set 
$\mathcal{N}:=\{u:F(u)=f\}$ is closed and convex if 
$F$ is monotone and continuous. 
A closed and convex set in a Hilbert space has a unique 
minimal-norm element. This element in 
$\mathcal{N}$ we denote by $y$, $F(y)=f$, and call it the minimal-norm 
solution to equation (1). 
We assume that 
 $f=F(y)$ is not known but
 $f_\delta$, the noisy data, are known, and $\|f_\delta-f\|\le \delta$. If $F'(u)$
 is not boundedly invertible then 
 solving equation \eqref{aeq1} for $u$ given noisy data $f_\delta$ is 
often (but not always) an ill-posed problem.
When $F$ is a linear bounded operator many methods were proposed for solving stably 
equation \eqref{aeq1} (see \cite{I}--\cite{R499} and the references therein). 
However, when $F$ is nonlinear then the theory is
less complete. 

Methods for solving equation \eqref{aeq1} were extensively studied in \cite{546}--\cite{R544}, 
\cite{R499}--\cite{491}. 
In \cite{R499}, \cite{546}, the following iterative scheme for solving equation \eqref{aeq1}
with monotone operators $F$ was investigated:
\begin{equation}
\label{hichehic}
u_{n+1} = u_n - \big{(}F'(u_n) + 
a_nI\big{)}^{-1}\big{(}F(u_n)+a_nu_n - f_\delta\big{)},
\qquad u_0=\tilde{u}_0.
\end{equation}
The convergence of this method was justified with an {\it a apriori} 
and an {\it a posteriori}
choice of stopping rules (see \cite{546}). 
In \cite{R544} a continuous version of the regularized Newton method \eqref{hichehic} with a stopping rule of discrepancy type is 
formulated and justified. 
Another iterative scheme with an {\it a posteriori}
choice of stopping rule was formulated and 
justified in \cite{R549}. 

In this paper we consider the following iterative for 
a stable solution to equation \eqref{aeq1}:
\begin{equation}
\label{aeq2}
u_{n+1} = u_n - \gamma_n \big{[}F(u_n)+a_nu_n - f_\delta\big{]},
\qquad u_0=\tilde{u}_0,
\end{equation}
where $F$ is a locally $\sigma$-inverse monotone operator, $\gamma_n\in(0,1)$, $n\ge 0$, and $\tilde{u}_0$ 
is a suitably chosen element in $H$ which will be specified later.

The advantages of this iterative scheme compared with
\eqref{hichehic} are:
\begin{enumerate}
\item{the absence of the inverse operator in the algorithm, which
makes the algorithm \eqref{aeq2} less expensive than \eqref{hichehic}}
\item{one does not have to compute 
the Fr\'{e}chet derivative of $F$}
\item{the Fr\'{e}chet differentiability of $F$ is not required.}
\end{enumerate}
A more expensive algorithm \eqref{hichehic} may converge faster than  
\eqref{aeq2} in some cases.

The convergence of the method \eqref{aeq2} for exact data was proved in
\cite{R499}. For noisy data it was proved in \cite{R550} that the element
$u_{n_\delta}$, defined by \eqref{aeq2} and an {\it a posteriori} choice
of stopping rule, converges to a solution to \eqref{aeq1} when $u_0$ and
$a_n$ are suitably chosen, provided that $H$ is a complex Hilbert space
and $F$ is a Fr\'{e}chet differentiable monotone operator.  However, it is
of interest to prove the convergence to the minimal-norm solution to
\eqref{aeq1}. The minimal-norm solution in problems with a linear
operator $F$ is the solution orthogonal to the null-space of $F$.
In linear algebra it is called the normal solution, and it is the
solution that is of interest in many computational problems.   
In nonlinear problems the minimal-norm solution is also the solution of 
interest in many cases, because it is often the solution with minimal 
energy.

In this paper we investigate a stopping rule based on a {\it discrepancy 
principle} (DP) for the iterative scheme \eqref{aeq2}. 
Using the local $\sigma$-inverse monotonicity of $F$, we prove convergence of
the method \eqref{aeq2} to the minimal-norm solution to \eqref{aeq1}. 
The rate of decay of the regularizing sequence $a_n$ in this paper 
is also faster than the one in \cite{R550}.
This saves the computer time and results in a faster 
convergence of our method. 
The main results of this paper are Theorem~\ref{mainthm}, \ref{theorem2}, and \ref{theorem3}. 
In Theorem~\ref{mainthm} 
a DP is formulated and   
the existence of a stopping time $n_\delta$ is proved. 
The convergence of the iterative scheme with the proposed DP to a solution to \eqref{aeq1} is proved in 
Theorem~\eqref{theorem2}. In Theorem~\eqref{theorem3} sufficient conditions 
for the convergence of the iterative scheme with the proposed DP 
to the minimal-norm solution to \eqref{aeq1} is justified mathematically.

\section{Auxiliary results}

Let us consider the following equation:
\begin{equation}
\label{2eq2}
F(\tilde{V}_{a,\delta})+a \tilde{V}_{a,\delta}-f_\delta = 0,\qquad a=const>0.
\end{equation}
It is known (see, e.g., \cite{D} and \cite{R499}) that equation 
\eqref{2eq2} 
with monotone continuous operator $F$ has a unique solution for any 
fixed $a>0$ and any $f_\delta\in H$. 

Lemmas \ref{lemma1}, \ref{lemma3} and \ref{lemma5} can be found in \cite{546}.
We include the proofs for the convenience of the reader. 

\begin{lemma}
\label{lemma1}
If \eqref{ceq2} holds and $F$ is continuous, then
$\|\tilde{V}_{a,\delta}\|=O(\frac{1}{a})$ as $a\to\infty$, and
\begin{equation}
\label{4eq2}
\lim_{a\to\infty}\|F(\tilde{V}_{a,\delta})-f_\delta\|=\|F(0)-f_\delta\|.
\end{equation}
\end{lemma}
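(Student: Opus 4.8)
The plan is to abbreviate $V := \tilde{V}_{a,\delta}$ and read the defining equation \eqref{2eq2} as $aV = f_\delta - F(V)$. The bound $\|V\| = O(1/a)$ should come from testing this identity against $V$ itself. Taking the inner product of \eqref{2eq2} with $V$ gives $\langle F(V), V\rangle + a\|V\|^2 = \langle f_\delta, V\rangle$. To control the term $\langle F(V), V\rangle$ from below I would use monotonicity of $F$ based at the point $0$: for the fixed value of $a$ the element $V$ has finite norm, so I can apply \eqref{ceq2} on any ball $B(0,R)$ with $R \geq \|V\|$, which contains both $V$ and $0$, and obtain $\langle F(V) - F(0), V\rangle \geq \sigma_R\|F(V)-F(0)\|^2 \geq 0$, i.e. $\langle F(V), V\rangle \geq \langle F(0), V\rangle$.

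Substituting this into the identity above yields $a\|V\|^2 \leq \langle f_\delta - F(0), V\rangle$, and then the Cauchy--Schwarz inequality gives $a\|V\|^2 \leq \|f_\delta - F(0)\|\,\|V\|$. Dividing by $\|V\|$ (the case $V=0$ being trivial, since then $F(0)=f_\delta$) produces $\|V\| \leq \|f_\delta - F(0)\|/a$, which is exactly the claimed $O(1/a)$ decay as $a\to\infty$.

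For the second assertion \eqref{4eq2} I would simply pass to the limit. The bound just obtained shows $V = \tilde{V}_{a,\delta} \to 0$ as $a\to\infty$, so by continuity of $F$ we have $F(V) \to F(0)$, and continuity of the norm then gives $\|F(\tilde{V}_{a,\delta}) - f_\delta\| = \|F(V) - f_\delta\| \to \|F(0) - f_\delta\|$. Equivalently, one may note from \eqref{2eq2} that $\|F(V) - f_\delta\| = a\|V\|$ and track this quantity directly.

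The argument is essentially routine; the only point that needs a little care is the use of inequality \eqref{ceq2}, which is stated only locally on balls $B(0,R)$. The mild obstacle is therefore to justify applying it to the pair $V,\,0$: this is harmless because for each fixed $a$ the solution $\tilde{V}_{a,\delta}$ is a single element of finite norm, so one may always choose $R$ large enough. In fact this observation shows that \eqref{ceq2} forces $F$ to be globally monotone, since any two points lie in a common ball, which is all that the estimate above really requires.
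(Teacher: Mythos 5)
Your proposal is correct and follows essentially the same route as the paper: test equation \eqref{2eq2} against $\tilde{V}_{a,\delta}$, drop the nonnegative term $\langle F(\tilde{V}_{a,\delta})-F(0),\tilde{V}_{a,\delta}\rangle$ via \eqref{ceq2}, apply Cauchy--Schwarz to get $\|\tilde{V}_{a,\delta}\|\le \|f_\delta-F(0)\|/a$, and conclude \eqref{4eq2} by continuity of $F$. Your added care about the local character of \eqref{ceq2} (choosing $R\ge\|\tilde{V}_{a,\delta}\|$ for each fixed $a$) is a reasonable point that the paper leaves implicit.
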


\begin{proof}
Rewrite \eqref{2eq2} as
$$
F(\tilde{V}_{a,\delta}) - F(0) + a\tilde{V}_{a,\delta} + F(0)-f_\delta = 0.
$$
Multiply this equation
by $\tilde{V}_{a,\delta}$, use the inequality $\langle F(\tilde{V}_{a,\delta})-F(0),\tilde{V}_{a,\delta}-0\rangle \ge 0$, which
follows from \eqref{ceq2}, and get:
$$
a\|\tilde{V}_{a,\delta}\|^2\le\langle a\tilde{V}_{a,\delta} + F(\tilde{V}_{a,\delta})-F(0), \tilde{V}_{a,\delta}\rangle = 
\langle f_\delta-F(0), \tilde{V}_{a,\delta}\rangle \le \|f_\delta-F(0)\|\|\tilde{V}_{a,\delta}\|.
$$
Therefore,
$\|\tilde{V}_{a,\delta}\|=O(\frac{1}{a})$. This and the continuity of $F$ imply \eqref{4eq2}.
\end{proof}

Let us recall the following result (see Lemma 6.1.7 \cite[p. 112]{R499}):
\begin{lemma}
\label{rejectedlem}
Assume that equation \eqref{aeq1} is solvable. Let $y$ be its minimal-norm solution.
Assume that $F:H\to H$ is a continuous monotone operator. Then
$$
\lim_{a\to 0} \|\tilde{V}_{a}-y\| = 0,
$$
where $\tilde{V}_a:=\tilde{V}_{a,0}$ which solves \eqref{2eq2} with $\delta=0$.
\end{lemma}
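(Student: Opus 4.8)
The plan is to first establish a uniform a priori bound on $\tilde{V}_a$, then extract a weak limit along a subsequence as $a\to 0$, identify this limit with the minimal-norm solution $y$, and finally upgrade weak convergence to strong convergence. The existence and uniqueness of $\tilde{V}_a$ for each $a>0$ is already guaranteed by the discussion preceding \eqref{2eq2}, so I may take $\tilde{V}_a$ as given. Using $F(y)=f$, I would rewrite the defining equation \eqref{2eq2} (with $\delta=0$) as $F(\tilde{V}_a)-F(y)+a\tilde{V}_a=0$. Taking the inner product with $\tilde{V}_a-y$ and using monotonicity, $\langle F(\tilde{V}_a)-F(y),\tilde{V}_a-y\rangle\ge 0$, I obtain $\langle \tilde{V}_a,\tilde{V}_a-y\rangle\le 0$, hence $\|\tilde{V}_a\|\le\|y\|$ for every $a>0$. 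This bound immediately gives $\|F(\tilde{V}_a)-f\|=a\|\tilde{V}_a\|\le a\|y\|\to 0$, so $F(\tilde{V}_a)\to f$ strongly as $a\to 0$.

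Next, given any sequence $a_n\to 0$, the bound $\|\tilde{V}_{a_n}\|\le\|y\|$ and the weak compactness of balls in $H$ let me pass to a subsequence with $\tilde{V}_{a_n}\rightharpoonup v$. The key step is to show $F(v)=f$. I would do this by Minty's device: for arbitrary $w\in H$, monotonicity gives $\langle F(\tilde{V}_{a_n})-F(w),\tilde{V}_{a_n}-w\rangle\ge 0$. Since $F(\tilde{V}_{a_n})\to f$ strongly while $\tilde{V}_{a_n}\rightharpoonup v$ weakly, the product $\langle F(\tilde{V}_{a_n}),\tilde{V}_{a_n}\rangle\to\langle f,v\rangle$, and passing to the limit term by term yields $\langle f-F(w),v-w\rangle\ge 0$ for all $w\in H$. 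Substituting $w=v-tz$ with $t>0$ and arbitrary $z\in H$, dividing by $t$, letting $t\to 0^+$, and using the continuity of $F$ gives $\langle f-F(v),z\rangle\ge 0$ for all $z\in H$; replacing $z$ by $-z$ forces $F(v)=f$, i.e.\ $v\in\mathcal{N}$.

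This identification of the weak limit is the main obstacle, because one must pass to the limit through the nonlinearity $F$ using only weak convergence of $\tilde{V}_{a_n}$; the combination of the strong convergence $F(\tilde{V}_{a_n})\to f$ with the monotonicity inequality is precisely what makes this possible, and it is the reason continuity and monotonicity of $F$ are both needed. Once $v\in\mathcal{N}$ is known, weak lower semicontinuity of the norm gives $\|v\|\le\liminf_{n}\|\tilde{V}_{a_n}\|\le\|y\|$; since $y$ is the minimal-norm solution, $\|y\|\le\|v\|$, so $\|v\|=\|y\|$, and by uniqueness of the minimal-norm element of the closed convex set $\mathcal{N}$ we conclude $v=y$.

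Finally I would upgrade weak convergence to strong convergence. From $\|\tilde{V}_{a_n}\|\le\|y\|=\|v\|$ together with weak lower semicontinuity it follows that $\|\tilde{V}_{a_n}\|\to\|v\|$, and in a Hilbert space weak convergence together with convergence of the norms implies $\tilde{V}_{a_n}\to v=y$ strongly. Since every sequence $a_n\to 0$ thus admits a subsequence converging strongly to the same limit $y$, a standard contradiction argument (assuming $\|\tilde{V}_{a_n}-y\|\ge\varepsilon$ along some sequence and extracting a strongly convergent subsequence) yields $\lim_{a\to 0}\|\tilde{V}_a-y\|=0$, which is the assertion of the lemma.
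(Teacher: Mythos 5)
Your proof is correct and complete. Note that the paper itself does not prove this lemma; it is quoted verbatim from Lemma 6.1.7 of the reference \cite{R499}, so there is no in-paper argument to compare against. Your route is the standard one for this classical fact: the monotonicity estimate $\langle \tilde{V}_a,\tilde{V}_a-y\rangle\le 0$ giving $\|\tilde{V}_a\|\le\|y\|$ and $\|F(\tilde{V}_a)-f\|=a\|\tilde{V}_a\|\to 0$, weak sequential compactness, Minty's trick to identify the weak limit as an element of $\mathcal{N}$ (here only hemicontinuity of $F$ is actually needed, and this is precisely where both hypotheses on $F$ enter), minimality plus uniqueness of the minimal-norm element of the closed convex set $\mathcal{N}$ to pin the limit down to $y$, and the norm-convergence upgrade from weak to strong convergence. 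All steps are justified, including the often-glossed point that $\langle F(\tilde{V}_{a_n}),\tilde{V}_{a_n}\rangle\to\langle f,v\rangle$ because one factor converges strongly while the other converges weakly and is bounded. The concluding subsequence argument correctly converts sequential convergence into the stated limit as $a\to 0$.
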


Let us consider the following equation
\begin{equation}
\label{3eq2}
F(V_{\delta,n})+a_n V_{\delta,n}-f_\delta = 0,\qquad a_n>0,
\end{equation}

For simplicity let us denote $V_n:=V_{\delta,n}$ when $\delta\not=0$.

\begin{lemma}
\label{lemma3} 
Assume that $0<(a_n)_{n=0}^\infty \searrow 0$. 
Then 
\begin{equation}
\label{eeq5}
\lim_{n\to\infty}\|F(V_{n})-f_\delta\|\le \delta.
\end{equation}
\end{lemma}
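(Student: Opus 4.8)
The plan is to exploit the defining equation \eqref{3eq2} directly. Since $F(V_n)+a_n V_n - f_\delta = 0$, one has the identity $F(V_n)-f_\delta = -a_n V_n$, and therefore $\|F(V_n)-f_\delta\| = a_n\|V_n\|$. Thus the whole lemma reduces to controlling the product $a_n\|V_n\|$ as $a_n\searrow 0$. First I would obtain an a priori bound on $\|V_n\|$ using the minimal-norm solution $y$, for which $F(y)=f$ and $\|f_\delta-f\|\le\delta$. Writing $F(V_n)-F(y)=(f_\delta-f)-a_n V_n$ and pairing with $V_n-y$, the monotonicity $\langle F(V_n)-F(y),V_n-y\rangle\ge 0$ (which follows from \eqref{ceq2.1}) gives $a_n\langle V_n,V_n-y\rangle\le\langle f_\delta-f,V_n-y\rangle\le\delta\|V_n-y\|$. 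Expanding $\langle V_n,V_n-y\rangle=\|V_n-y\|^2+\langle y,V_n-y\rangle$ and using the Cauchy inequality to absorb the cross term, I expect to reach $a_n\|V_n-y\|\le\delta+a_n\|y\|$, and hence $a_n\|V_n\|\le\delta+2a_n\|y\|$. Letting $n\to\infty$ and using $a_n\to 0$ then yields $\limsup_{n\to\infty}\|F(V_n)-f_\delta\|\le\delta$.

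To upgrade the $\limsup$ to an honest limit, I would show that the discrepancy $a\mapsto a\|V_{a,\delta}\|$ is nondecreasing in $a$. For $a>b>0$, subtracting the two instances of \eqref{3eq2} gives $F(V_{a,\delta})-F(V_{b,\delta})=bV_{b,\delta}-aV_{a,\delta}$; pairing this with $V_{a,\delta}-V_{b,\delta}$ and invoking monotonicity together with the Cauchy inequality should produce $(a+b)\|V_{a,\delta}\|\,\|V_{b,\delta}\|\ge a\|V_{a,\delta}\|^2+b\|V_{b,\delta}\|^2$, which factors as $(a\|V_{a,\delta}\|-b\|V_{b,\delta}\|)(\|V_{a,\delta}\|-\|V_{b,\delta}\|)\le 0$. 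A companion pairing (decomposing $aV_{a,\delta}-bV_{b,\delta}$ two ways) shows $\|V_{a,\delta}\|\le\|V_{b,\delta}\|$, and these two facts together force $a\|V_{a,\delta}\|\ge b\|V_{b,\delta}\|$. Since $a_n\searrow 0$, the sequence $a_n\|V_n\|=\|F(V_n)-f_\delta\|$ is then nonincreasing and bounded below by $0$, hence convergent; combining this with the $\limsup$ bound of the previous step yields \eqref{eeq5}.

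The main obstacle is the a priori bound: because $\|V_n\|$ may itself grow like $\delta/a_n$ as $a_n\to 0$, the product $a_n\|V_n\|$ is a genuine indeterminate form, and the fact that it stays at most $\delta$ in the limit comes entirely from the cancellation provided by the monotonicity inequality. The delicate point is handling the cross term $\langle y,V_n-y\rangle$ so that it contributes only a vanishing $a_n\|y\|$ term rather than a contribution of order $\|V_n\|$; everything else is routine rearrangement and passage to the limit.
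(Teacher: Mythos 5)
Your proof is correct, but it takes a shorter route than the paper for the main estimate. Both arguments hinge on the same a priori bound $a_n\|V_n-y\|\le a_n\|y\|+\delta$ (the paper's \eqref{1eq2}), obtained by pairing $F(V_n)-F(y)+a_nV_n+f-f_\delta=0$ with $V_n-y$ and using monotonicity. From there the paper takes a more roundabout path: it pairs the defining equation with $F(V_n)-f_\delta$ itself to get the quadratic inequality \eqref{1eq1}, combines it with \eqref{1eq2}, and closes the argument with the absorption trick $ab\le\epsilon a^2+b^2/(4\epsilon)$. You instead exploit the exact identity $F(V_n)-f_\delta=-a_nV_n$, so that $\|F(V_n)-f_\delta\|=a_n\|V_n\|\le a_n\|V_n-y\|+a_n\|y\|\le\delta+2a_n\|y\|$, and the bound follows in one line with no $\epsilon$-argument. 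Your second step, establishing that $a\mapsto a\|V_{a,\delta}\|$ is nondecreasing via the factorization $(a\|V_a\|-b\|V_b\|)(\|V_a\|-\|V_b\|)\le 0$ together with $\|V_a\|\le\|V_b\|$ for $a>b$, is exactly the content of the paper's Lemma~\ref{lemma5} (proved there with the same pairings); including it makes your proof self-contained and justifies writing $\lim$ rather than $\limsup$ in \eqref{eeq5} — a point the paper's own proof of this lemma glosses over, since as written it only establishes the $\limsup$ bound and the existence of the limit is supplied only later by Lemma~\ref{lemma5}. The one thing the paper's longer route buys is that it never needs the identity $\|F(V_n)-f_\delta\|=a_n\|V_n\|$, so it would survive if $V_n$ solved \eqref{3eq2} only approximately; in the setting of this lemma that generality is not needed.
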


\begin{proof}
We have $F(y)=f$, and
\begin{align*}
0=&\langle F(V_n)+a_nV_n-f_\delta, F(V_n)-f_\delta \rangle\\
=&\|F(V_n)-f_\delta\|^2+a_n\langle V_n-y, F(V_n)-f_\delta \rangle + a_n\langle y, F(V_n)-f_\delta \rangle\\
=&\|F(V_n)-f_\delta\|^2+a_n\langle V_n-y, F(V_n)-F(y) \rangle + a_n\langle V_n-y, f-f_\delta \rangle \\
&+ a_n\langle y, F(V_n)-f_\delta \rangle\\
\ge &\|F(V_n)-f_\delta\|^2 + a_n\langle V_n-y, f-f_\delta \rangle + a_n\langle y, F(V_n)-f_\delta \rangle.
\end{align*}
Here the inequality $\langle V_n-y, F(V_n)-F(y) \rangle\ge0$ was used. 
Therefore
\begin{equation}
\label{1eq1}
\begin{split}
\|F(V_n)-f_\delta\|^2 &\le -a_n\langle V_n-y, f-f_\delta \rangle - a_n\langle y, F(V_n)-f_\delta \rangle\\
&\le a_n\|V_n-y\| \|f-f_\delta\| + a_n\|y\| \|F(V_n)-f_\delta\|\\
&\le  a_n\delta \|V_n-y\|  + a_n\|y\| \|F(V_n)-f_\delta\|.
\end{split}
\end{equation}
On the other hand, one has:
\begin{align*}
0&= \langle F(V_n)-F(y) + a_nV_n +f -f_\delta, V_n-y\rangle\\
&=\langle F(V_n)-F(y),V_n-y\rangle + a_n\| V_n-y\| ^2 + a_n\langle y, V_n-y\rangle + \langle f-f_\delta, V_n-y\rangle\\
&\ge  a_n\| V_n-y\| ^2 + a_n\langle y, V_n-y\rangle + \langle f-f_\delta, V_n-y\rangle,
\end{align*}
where the inequality $\langle V_n-y, F(V_n)-F(y) \rangle\ge0$ was used. Therefore,
$$
a_n\|V_n-y\|^2 \le a_n\|y\|\|V_n-y\|+\delta\|V_n-y\|.
$$
This implies
\begin{equation}
\label{1eq2}
a_n\|V_n-y\|\le a_n\|y\|+\delta.
\end{equation}
From \eqref{1eq1} and \eqref{1eq2}, and an elementary inequality $ab\le \epsilon a^2+\frac{b^2}{4\epsilon},\,\forall\epsilon>0$, one gets:
\begin{equation}
\label{3eq4}
\begin{split}
\|F(V_n)-f_\delta\|^2&\le \delta^2 + a_n\|y\|\delta + a_n\|y\| \|F(V_n)-f_\delta\|\\
&\le \delta^2 + a_n\|y\|\delta + \epsilon \|F(V_n)-f_\delta\|^2 + 
\frac{1}{4\epsilon}a_n^2\|y\|^2,
\end{split}
\end{equation}
where $\epsilon>0$ is fixed, independent of $n$, and can be chosen 
arbitrary small. 
Let $n\to\infty$ so $a_n\searrow 0$. Then \eqref{3eq4} implies
$\lim_{n\to\infty}(1-\epsilon)\|F(V_n)-f_\delta\|^2\le \delta^2$,\, $\forall\, \epsilon>0$. This implies
$\lim_{n\to\infty}\|F(V_n)-f_\delta\| \le \delta$.

Lemma~\ref{lemma3} is proved.
\end{proof}

\begin{remark}
\label{rem3}
{\rm 
Let $V_{0,n}:=V_{\delta,n}|_{\delta=0}$. Then $F(V_{0,n})+a_n V_{0,n}-f=0$. 
Note that
\begin{equation}
\label{rejected11}
\|V_{\delta,n}-V_{0,n}\|\le \frac{\delta}{a_n}.
\end{equation}
Indeed, from \eqref{2eq2} one gets
$$
F(V_{\delta,n}) - F(V_{0,n}) + a_n (V_{\delta,n}-V_{0,n}) = f_\delta - f.
$$
Multiply this equality with $V_{\delta,n}-V_{0,n}$ and use \eqref{ceq2} to get:
\begin{align*}
\delta \|V_{\delta,n}-V_{0,n}\| &\ge \langle f_\delta - f, V_{\delta,n}-V_{0,n} \rangle\\
&= \langle F(V_{\delta},n) - F(V_{0,n}) + a_n (V_{\delta,n}-V_{0,n}), V_{\delta,n}-V_{0,n} \rangle\\
&\ge a_n \|V_{\delta,n}-V_{0,n}\|^2.
\end{align*}
This implies \eqref{rejected11}. 
Similarly, from the equation
$$
F(V_{0,n}) + a_n V_{0,n} -F(y)=0,
$$
one derives that
\begin{equation}
\label{rejected12}
\|V_{0,n}\| \le \|y\|.
\end{equation}
Similar arguments one can find in \cite{R499}. 

From \eqref{rejected11} and \eqref{rejected12}, one gets the following estimate:
\begin{equation}
\label{2eq1}
\|V_{n}\|\le \|V_{0,n}\|+\frac{\delta}{a_n}\le \|y\|+\frac{\delta}{a_n}.
\end{equation}

From equation \eqref{3eq2} one gets
$$
F(V_{n+1}) - F(V_n) = a_n V_n - a_{n+1}V_{n+1}. 
$$
This and the monotonicity of $F$ imply 
\begin{equation}
\label{eqjkl6}
\begin{split}
0 &\le \langle a_n V_n - a_{n+1}V_{n+1}, V_{n+1} - V_n \rangle \\
&= - a_n \|V_n - V_{n+1}\|^2 + (a_n - a_{n+1})\langle V_{n+1}, V_{n+1} - V_n \rangle\\
&\le - a_n \|V_n - V_{n+1}\|^2 + (a_n - a_{n+1})\|V_{n+1}\|\| V_{n+1} - V_n\|.
\end{split}
\end{equation}
Thus, one gets
\begin{equation}
\label{eqjkl7}
\|V_n - V_{n+1}\| \le \frac{a_n - a_{n+1}}{a_n}\|V_{n+1}\|,\qquad \forall n\ge 0.
\end{equation}
}
\end{remark}

\begin{lemma}
\label{lemma5}
Assume $\|F(0)-f_\delta\|>0$. 
Let $0<a_n\searrow 0$, $F$ be monotone, and
$$
\ell_n:=\|F(V_n) - f_\delta\|,\quad  k_n:=\|V_n\|,\qquad n=0,1,...,
$$ 
where $V_n$ solves \eqref{3eq2}. 
Then
$\ell_n$ is decreasing and $k_n$ is increasing.
\end{lemma}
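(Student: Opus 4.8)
The plan is to convert the two monotonicity statements about $\ell_n$ and $k_n$ into a single inequality that couples their increments, and then to fix the direction of monotonicity using the ordering of the regularization parameters $a_n$. The starting point is the defining equation \eqref{3eq2}, which I would rewrite as $F(V_n)-f_\delta=-a_nV_n$. Taking norms gives the bookkeeping identity
\begin{equation*}
\ell_n=\|F(V_n)-f_\delta\|=a_n\|V_n\|=a_nk_n,
\end{equation*}
which ties the two quantities together. I would also record, as an immediate consequence, that $k_n>0$ for every $n$: if $k_n=0$ then $V_n=0$ and $\ell_n=\|F(0)-f_\delta\|>0$ by hypothesis, contradicting $\ell_n=a_nk_n=0$.

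Next I would produce the coupling inequality. Subtracting the equations \eqref{3eq2} for the indices $n$ and $n+1$ gives $F(V_{n+1})-F(V_n)=a_nV_n-a_{n+1}V_{n+1}$, the relation already used in Remark~\ref{rem3}. Pairing this with $V_{n+1}-V_n$, discarding the nonnegative term $\langle F(V_{n+1})-F(V_n),V_{n+1}-V_n\rangle$ supplied by monotonicity, and expanding, I obtain
\begin{equation*}
(a_n+a_{n+1})\langle V_n,V_{n+1}\rangle\ge a_nk_n^2+a_{n+1}k_{n+1}^2 .
\end{equation*}
Bounding $\langle V_n,V_{n+1}\rangle\le k_nk_{n+1}$ by the Cauchy inequality and factoring the result yields
\begin{equation*}
(k_{n+1}-k_n)(a_nk_n-a_{n+1}k_{n+1})\ge 0,\qquad\text{i.e.}\qquad (k_{n+1}-k_n)(\ell_n-\ell_{n+1})\ge 0 .
\end{equation*}

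The hard part is that this coupling only forces $k_{n+1}-k_n$ and $\ell_n-\ell_{n+1}$ to share a sign; it does not by itself say which way each moves. To break the symmetry I would use $a_n>a_{n+1}$. If $k_{n+1}<k_n$, the coupling forces $\ell_n\le\ell_{n+1}$, i.e.\ $a_nk_n\le a_{n+1}k_{n+1}$; but $a_n>a_{n+1}>0$ together with $k_n>k_{n+1}\ge0$ gives $a_nk_n>a_{n+1}k_{n+1}$, a contradiction. Hence $k_{n+1}\ge k_n$, so $k_n$ is increasing. Feeding $k_{n+1}\ge k_n$ back into the coupling then gives $\ell_n\ge\ell_{n+1}$ whenever $k_{n+1}>k_n$, while the borderline case $k_{n+1}=k_n=:k>0$ is handled directly from $\ell_n=a_nk$, $\ell_{n+1}=a_{n+1}k$ and $a_n>a_{n+1}$. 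To upgrade to strict monotonicity one can observe that equality throughout would force the Cauchy inequality to be sharp, hence $V_n=V_{n+1}$; substituting into \eqref{3eq2} gives $(a_n-a_{n+1})V_n=0$ and thus $V_n=0$, which is excluded by $k_n>0$. This shows $k_n$ is (strictly) increasing and $\ell_n$ is (strictly) decreasing, completing the proof.
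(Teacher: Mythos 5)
Your argument is, at its core, the same as the paper's: the identity $\ell_n=a_nk_n$, the observation that $k_n\neq 0$ because $\|F(0)-f_\delta\|>0$, the coupling inequality $(k_{n+1}-k_n)(\ell_n-\ell_{n+1})\ge 0$ obtained from monotonicity plus the Cauchy inequality (this is exactly \eqref{2eq6} with $m=n+1$), and the use of the strict decrease of $a_n$ together with $k_n>0$ to break the symmetry. Up to the point where you conclude $k_{n+1}\ge k_n$ and $\ell_{n+1}\le \ell_n$, with strict inequality for $\ell$ in the borderline case $k_{n+1}=k_n$, everything checks out, and this already yields the monotonicity the paper states and later uses (the paper's own proof likewise gives only $\ell_n\ge\ell_m$ for $m>n$).

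The one genuine flaw is the final ``upgrade to strict monotonicity'' as applied to $\ell_n$. Equality throughout your chain forces $\langle V_n,V_{n+1}\rangle=\|V_n\|\,\|V_{n+1}\|$, which gives only proportionality, $V_{n+1}=\lambda V_n$ with $\lambda=k_{n+1}/k_n\ge 0$; it gives $V_n=V_{n+1}$ only when the two norms coincide. That is precisely the situation in the case $k_{n+1}=k_n$, so your argument does correctly rule that case out and shows $k_n$ is strictly increasing (this mirrors the paper's step $k_m=k_n\Rightarrow V_m=V_n\Rightarrow m=n$). But when $k_{n+1}>k_n$ and $\ell_n=\ell_{n+1}$ one gets $\lambda>1$, hence $V_n\neq V_{n+1}$, and no contradiction arises. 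In fact strict decrease of $\ell_n$ is false in general: for the monotone constant operator $F\equiv c$ with $c\neq f_\delta$ one has $V_n=(f_\delta-c)/a_n$, so $k_n$ increases strictly while $\ell_n\equiv\|c-f_\delta\|$ is constant. Delete the strictness claim for $\ell_n$ (keeping it for $k_n$) and your proof is complete and essentially coincides with the paper's, merely organized around consecutive indices rather than arbitrary pairs $m,n$.
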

 
\begin{proof}
Since $\|F(0)-f_\delta\|>0$, it follows that $k_n\not=0,\, \forall n\ge0$. 
One has
\begin{equation}
\label{1eq3}
\begin{split}
0&\le \langle F(V_n)-F(V_m),V_n-V_m\rangle\\
&= \langle -a_nV_n + a_mV_m,V_n-V_m\rangle\\
&= (a_n+ a_m)\langle V_n,V_m \rangle -a_n\|V_n\|^2 -  a_m\|V_m\|^2.
\end{split}
\end{equation}
Thus,
\begin{equation}
\label{2eq6}
\begin{split}
0&\le (a_n+ a_m)\langle V_n,V_m \rangle -a_n\|V_n\|^2 -  a_m\|V_m\|^2\\
& \le  (a_n+ a_m)\|V_n\|\|V_m \| - a_n\|V_n\|^2 -  a_m\|V_m\|^2\\
& = (a_n \|V_n\| -  a_m \|V_m\|)(\|V_m\|-\|V_n\|)\\
& = (\ell_n-\ell_m)(k_m-k_n).
\end{split}
\end{equation}

From \eqref{3eq2} one gets 
\begin{equation}
\label{eq*}
\ell_n= \|F(V_n) - f_\delta\|=a_n\|V_n\| = a_n k_n,\qquad n\ge 0.
\end{equation}
If $k_m> k_n$ then \eqref{2eq6} and \eqref{eq*} imply $\ell_n\ge \ell_m$, so
$$
a_nk_n=\ell_n \ge \ell_m = a_mk_m>  a_m k_n.
$$
Thus, if $k_m> k_n$ then $ a_m< a_n$ and, therefore, $m> n$,
because $a_n$ is decreasing.

Similarly, if $k_m< k_n$ then $\ell_n\le \ell_m$.
This implies $ a_m> a_n$, so $m< n$.

If $k_m= k_n$ then \eqref{1eq3} implies
$$
\|V_m\|^2\le \langle V_m,V_n \rangle \le \|V_m\|\|V_n\| = \|V_m\|^2.
$$
This implies $V_m=V_n$, and then $a_n=a_m$. Hence, $m=n$, because $a_n$ is decreasing.

Therefore $\ell_n$ is decreasing
and $k_n$ is increasing. Lemma~\ref{lemma5} is proved.
\end{proof}

\begin{remark}
\label{remmoi}
{\rm
From Lemma~\ref{lemma1} and Lemma~\ref{lemma5} one concludes that
\begin{equation}
\label{eqstar}
a_n\|V_n\|=\|F(V_n)-f_\delta\| \le \|F(0)-f_\delta\|,\qquad \forall n\ge 0.
\end{equation}
}
\end{remark}

Let
$0<a(t)\in C^1(\mathbb{R}_+)$  
satisfy the following conditions:
\begin{equation}
\label{eqsxa}
0< a(t)\searrow 0,
\qquad \nu(t):=\frac{|\dot{a}(t)|}{a^2(t)}\searrow 0,\qquad t\ge 0.
\end{equation}
Let $0<h=const$ and 
$$
a_n:=a(nh),\qquad n\ge 0.
$$

\begin{remark}
It follows from \eqref{eqsxa} that
\begin{equation}
\label{eqz8}
0<\frac{1}{a_{n+1}}-\frac{1}{a_n}=-\int_{nh}^{(n+1)h}\frac{\dot{a}(s)}{a^2(s)}ds \le h\nu(n)\le h\nu(0).
\end{equation}
Inequalities \eqref{eqz8} imply
\begin{equation}
\label{eql2}
1< \frac{a_n}{a_{n+1}}\le 1+ a_nh\nu(0).
\end{equation}
From the relation $\lim_{n\to\infty}a_n = 0$ and \eqref{eql2} one gets
\begin{equation}
\label{eqvv}
\lim_{n\to\infty} \frac{a_n}{a_{n+1}} = 1.
\end{equation}
From \eqref{eqsxa} and \eqref{eqz8} one gets
\begin{equation}
\label{eqqe}
\lim_{n\to\infty} \frac{a_n - a_{n+1}}{a_na_{n+1}}= 0.
\end{equation}
\end{remark}

\begin{remark}
Let $b\in(0,1)$, $c\ge1$, $d>0$ and 
$$
a(t)=\frac{d}{(c+t)^b}.
$$
Then $a(t)$ satisfies \eqref{eqsxa}.
\end{remark}

\begin{lemma}
\label{lemauxi2}
Let $0<h=const$ and $a(t)$ satisfy \eqref{eqsxa} and the following conditions
\begin{equation}
\label{eqkkk2}
a(0)h\le 2, \qquad \nu(0)=\frac{|\dot{a}(0)|}{a^2(0)}\le \frac{1}{10}. 
\end{equation}
Let $a_n:=a(nh)$ and 
\begin{equation}
\label{eqxvn}
\varphi_n:=\sum_{i=1}^{n} \frac{a_{i}h}{2},\qquad n\ge 1,
\end{equation}
Then the following inequality holds
\begin{equation}
\label{auxieq3}
e^{-\varphi_n}\sum_{i=0}^{n-1} e^{\varphi_{i+1}}(a_i-a_{i+1})\|V_i\| \le
 \frac{1}{2}a_n\|V_n\|,\qquad n\ge 1.
\end{equation} 
\end{lemma}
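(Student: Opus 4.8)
The plan is to prove \eqref{auxieq3} by induction on $n$, after rewriting its left-hand side as a one-step recursion. Denote by
\[
S_n:=e^{-\varphi_n}\sum_{i=0}^{n-1} e^{\varphi_{i+1}}(a_i-a_{i+1})\|V_i\|
\]
the left-hand side of \eqref{auxieq3}; the goal is $S_n\le \tfrac12 a_n\|V_n\|$. Splitting off the top term $i=n$ from $S_{n+1}$ and using $\varphi_{n+1}-\varphi_n=\tfrac12 a_{n+1}h$ from \eqref{eqxvn}, one obtains the structural identity
\[
S_{n+1}=e^{-a_{n+1}h/2}\,S_n+(a_n-a_{n+1})\|V_n\|,\qquad S_1=(a_0-a_1)\|V_0\|,
\]
on which the whole argument rests.

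For the base case $n=1$ the plan is to use that $\|V_n\|=k_n$ is increasing (Lemma~\ref{lemma5}), so $\|V_0\|\le\|V_1\|$, together with the quantitative ratio bound \eqref{eql2}, which gives $a_0-a_1\le a_0 a_1 h\,\nu(0)$. Hence $S_1=(a_0-a_1)\|V_0\|\le a_0 h\,\nu(0)\,a_1\|V_1\|$, and the hypotheses \eqref{eqkkk2} yield $a_0 h\,\nu(0)\le 2\cdot\tfrac1{10}=\tfrac15\le\tfrac12$, which is exactly $S_1\le\tfrac12 a_1\|V_1\|$.

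For the inductive step, assume $S_n\le\tfrac12 a_n\|V_n\|$. Feeding this into the recursion and factoring out $\|V_n\|$ gives
\[
S_{n+1}\le\Big[\tfrac12 a_n e^{-a_{n+1}h/2}+(a_n-a_{n+1})\Big]\|V_n\|.
\]
The bracketed coefficient is nonnegative (since $a_n\ge a_{n+1}$), so I may again invoke the monotonicity $\|V_n\|\le\|V_{n+1}\|$ and reduce the entire claim to the scalar inequality $\tfrac12 a_n e^{-a_{n+1}h/2}+(a_n-a_{n+1})\le\tfrac12 a_{n+1}$. Dividing by $a_{n+1}$ and writing $r:=a_n/a_{n+1}\ge1$ and $y:=a_{n+1}h\in(0,2]$, this is equivalent to $r\le 3/\bigl(2+e^{-y/2}\bigr)$.

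This last inequality is where the constants in \eqref{eqkkk2} must be spent carefully, and I expect it to be the main obstacle. From \eqref{eql2} one has $r-1\le a_n h\,\nu(0)=r\,y\,\nu(0)\le ry/10$, hence $r\le 10/(10-y)$, which is legitimate since $y\le a_0 h\le 2<10$. It therefore suffices to check $10/(10-y)\le 3/\bigl(2+e^{-y/2}\bigr)$, i.e.\ after cross-multiplying (both denominators being positive on $[0,2]$), the purely one-variable inequality $e^{-y/2}\le 1-\tfrac{3y}{10}$ for $0\le y\le 2$. Setting $g(y):=1-\tfrac{3y}{10}-e^{-y/2}$, one has $g(0)=0$, $g''(y)=-\tfrac14 e^{-y/2}<0$, and $g(2)=\tfrac25-e^{-1}>0$; concavity then forces $g$ to lie above the chord joining $(0,0)$ and $(2,g(2))$, so $g\ge0$ on $[0,2]$. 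This closes the induction and proves \eqref{auxieq3}. The delicacy is entirely in the small-$y$ regime, where both $r-1$ and $1-e^{-y/2}$ are first order in $y$ and the coefficients $\tfrac{3}{10}$ and $\tfrac12$ must line up; the slack $g'(0)=\tfrac15>0$ is precisely what the bounds $a_0h\le2$ and $\nu(0)\le\tfrac1{10}$ buy.
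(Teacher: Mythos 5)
Your proof is correct and is essentially the paper's argument in a different packaging: your scalar inequality $r\le 3/(2+e^{-y/2})$ is exactly the paper's key one-step estimate \eqref{eqrj2}, your induction on $S_n$ is the paper's telescoping of $\sum_i\bigl(a_{i+1}e^{\varphi_{i+1}}-a_ie^{\varphi_i}\bigr)$ in \eqref{eqpp2}, and both spend the hypotheses \eqref{eqkkk2} in the same place (the paper verifies the scalar inequality via monotonicity of $(1-e^{-x})/x$, you via concavity of $1-\tfrac{3y}{10}-e^{-y/2}$; both are fine). No gaps.
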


\begin{proof} 
First, let us prove that
\begin{equation}
\label{eqrj1}
e^{\varphi_n}(a_{n-1} - a_n) \le \frac{1}{2}(a_ne^{\varphi_n} - a_{n-1}e^{\varphi_{n-1}}),
\qquad \forall n\ge 1.
\end{equation}
Inequality \eqref{eqrj1} is equivalent to
\begin{equation}
\label{eqrj2}
\frac{3a_n}{a_{n-1}} \ge \frac{2e^{\varphi_n}+e^{\varphi_{n-1}}}{e^{\varphi_n}}
=2+e^{- \frac{ha_n}{2}},\qquad n\ge 1.
\end{equation}
This inequality is equivalent to
\begin{equation}
\label{eqrj5}
\frac{a_{n-1}-a_n}{a_{n-1}a_n} \le \frac{1-e^{-\frac{ha_n}{2}}}{3a_n},\qquad \forall n\ge 1. 
\end{equation}
Let us prove \eqref{eqrj5}. From \eqref{eqsxa} and \eqref{eqkkk2} one gets
\begin{equation}
\label{eqrj6}
\frac{a_{n-1}-a_n}{a_{n-1}a_n} = \int_{(n-1)h}^{nh}\frac{|\dot{a}(s)|}{a^2(s)}ds \le h\nu((n-1)h) \le h\nu(0)
\le \frac{h}{10}. 
\end{equation}

Note that the function $\tilde{f}(x) = \frac{1-e^{-x}}{x}$ is decreasing on $(0,\infty)$. 
Therefore, one gets
\begin{equation}
\label{eqrj7}
\frac{1-e^{-\frac{ha_n}{2}}}{3a_n} = 
\frac{h \tilde{f}(\frac{ha_n}{2})}{6} \ge \frac{h \tilde{f}(\frac{ha_0}{2})}{6} 
\ge \frac{h \tilde{f}(1)}{6}\ge \frac{h\frac{6}{10}}{6} = \frac{h}{10}. 
\end{equation}
We have used the inequalities $a_nh\le a_0h\le 2$, $\forall n\ge 1$, and $\tilde{f}(1)>\frac{6}{10}$ in \eqref{eqrj7}. 
Inequality \eqref{eqrj5} follows from \eqref{eqrj6} and \eqref{eqrj7}. Thus, \eqref{eqrj1} holds.

From inequality \eqref{eqrj1} one obtains
\begin{equation}
\label{eqpp2}
2\sum_{i=0}^{n-1} e^{\varphi_{i+1}}(a_i-a_{i+1})\le 
\sum_{i=0}^{n-1} (a_{i+1}e^{\varphi_{i+1}} - a_{i}e^{\varphi_{i}})
 < e^{\varphi_n}a_n,\qquad n\ge 1.
\end{equation}
Multiplying \eqref{eqpp2} by $\frac{1}{2}\|V_n\|e^{-\varphi_n}$ and recalling the fact that $\|V_i\|$
is increasing (see Lemma~\ref{lemma5}), one gets inequality
\eqref{auxieq3}. Lemma~\ref{lemauxi2} is proved.
\end{proof}

\begin{lemma}
\label{lemma2}
Let $R$ and $\sigma_R$ be positive constants and $F$ be 
an operator in a Hilbert space $H$
satisfying the following inequality:
\begin{equation}
\label{eqg3}
\langle F(u)-F(v),u-v\rangle \ge \sigma_R \|F(u)-F(v)\|^2,\qquad \forall u,v\in B(0,R)\subset H.
\end{equation}
Assume that
\begin{equation}
\label{eqg1}
0<\gamma\le \frac{2}{\sigma^{-1}_R+2a},\qquad a=const\ge 0.
\end{equation}
Then
\begin{equation}
\label{eqg2}
\mu(u,v):=\|u-v - \frac{\gamma}{1-\gamma a}[F(u)-F(v)]\| \le \|u-v\|,\qquad \forall u,v\in B(0,R).
\end{equation}
\end{lemma}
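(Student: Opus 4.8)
The plan is to square $\mu(u,v)$ and reduce the claimed nonexpansivity to a single scalar inequality on the coefficient $\frac{\gamma}{1-\gamma a}$. I first introduce the shorthand $w:=u-v$ and $g:=F(u)-F(v)$, together with the scalar $\beta:=\frac{\gamma}{1-\gamma a}$. Before anything else I would check that $\beta$ is well defined and positive. Since $\sigma_R^{-1}>0$, hypothesis \eqref{eqg1} forces $\gamma<\frac{2}{2a}=\frac{1}{a}$ when $a>0$ (and trivially $1-\gamma a=1>0$ when $a=0$); hence $1-\gamma a>0$ and $\beta>0$. This sign check is small but it is exactly the point on which the rest of the argument hinges, so I would dispatch it first.

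Next I would expand
$$
\mu(u,v)^2=\|w-\beta g\|^2=\|w\|^2-2\beta\langle w,g\rangle+\beta^2\|g\|^2 .
$$
Thus the desired bound $\mu(u,v)\le\|w\|$ is equivalent to $\beta^2\|g\|^2\le 2\beta\langle w,g\rangle$, and since $\beta>0$ this reduces, after dividing by $\beta$, to
$$
\beta\|g\|^2\le 2\langle w,g\rangle .
$$
If $g=0$ this holds trivially, so I may assume $g\ne 0$. Here the hypothesis \eqref{eqg3}, namely $\langle w,g\rangle\ge\sigma_R\|g\|^2$, enters: it suffices to prove the stronger inequality $\beta\|g\|^2\le 2\sigma_R\|g\|^2$, i.e.\ $\beta\le 2\sigma_R$, because then $\beta\|g\|^2\le 2\sigma_R\|g\|^2\le 2\langle w,g\rangle$.

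It therefore remains to verify the purely algebraic statement $\frac{\gamma}{1-\gamma a}\le 2\sigma_R$. Starting from \eqref{eqg1} rewritten as $\gamma(\sigma_R^{-1}+2a)\le 2$, I would rearrange to $\gamma\sigma_R^{-1}\le 2(1-\gamma a)$ and divide by the positive quantity $1-\gamma a$ to obtain exactly $\frac{\gamma\sigma_R^{-1}}{1-\gamma a}\le 2$, which is $\beta\le 2\sigma_R$. This closes the argument. The computation is elementary throughout; as noted, the only genuine subtlety is ensuring $1-\gamma a>0$, since this sign is needed both to define $\beta$ and to preserve the direction of the inequality at the division step, so handling it up front is what makes the chain of reductions clean.
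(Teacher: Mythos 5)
Your proof is correct and follows essentially the same route as the paper's: expand $\mu^2(u,v)$, apply \eqref{eqg3} to the cross term, and reduce the claim to an algebraic consequence of \eqref{eqg1}. The only (welcome) cosmetic difference is that you isolate the cleaner scalar condition $\frac{\gamma}{1-\gamma a}\le 2\sigma_R$ and explicitly verify $1-\gamma a>0$, whereas the paper factors the combined coefficient $\frac{2\gamma\sigma_R}{1-\gamma a}-\frac{\gamma^2}{(1-\gamma a)^2}$ directly.
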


\begin{proof}
Let us fix $R>0$ and denote $\sigma:=\sigma_R$ and $w:=u-v$. 
From \eqref{eqg3}, one gets, $\forall u,v\in B(0,R)$, the following inequality:
\begin{equation}
\label{eqg4}
\begin{split}
\mu^2(u,v) &= \|w\|^2 - \frac{2\gamma}{1-\gamma a}\langle w, F(u)-F(v)\rangle + 
\frac{\gamma^2}{(1-\gamma a)^2}\|F(u)-F(v)\|^2\\
&\le \|w\|^2 - \frac{2\gamma}{1-\gamma a} \sigma\|F(u)-F(v)\|^2 + 
\frac{\gamma^2}{(1-\gamma a)^2}\|F(u)-F(v)\|^2\\
&= \|w\|^2 - \bigg{(}\frac{2\gamma\sigma}{1-\gamma a} - \frac{\gamma^2}{(1-\gamma a)^2} \bigg{)}\|F(u)-F(v)\|^2. 
\end{split}
\end{equation}
It follows from \eqref{eqg1} that
\begin{equation}
\label{eqg5}
\begin{split}
\frac{2\gamma\sigma}{1-\gamma a} - \frac{\gamma^2}{(1-\gamma a)^2} 
&= \frac{\gamma\sigma}{(1-\gamma a)^2}[2(1-\gamma a) -\sigma^{-1}\gamma]\\
&= \frac{\gamma\sigma}{(1-\gamma a)^2}(\sigma^{-1}+2a)[\frac{2}{\sigma^{-1}+2a} -\gamma]\ge 0.
\end{split}
\end{equation}
Inequality \eqref{eqg2} follows from inequalities \eqref{eqg4} and \eqref{eqg5}.
Lemma~\ref{lemma2} is proved. 
\end{proof}

\begin{lemma}
\label{lemma2.10}
Let $F:H\to H$ be a Fr\'{e}chet differentiable monotone operator with locally bounded $F'$, i.e.,
\begin{equation}
\label{eqsoccon}
\|F'(u)\| \le M(R),\qquad \forall u\in B(u_0,R),
\end{equation}
where $H$ is a Hilbert space. 
Let one of the following assumptions hold:
\begin{enumerate}
\item{$H$ is a real Hilbert space and $F'$ is selfadjoint,} 
\item{$H$ is a complex Hilbert space.}
\end{enumerate} 
Then $F$ is a locally $\sigma$-inverse monotone operator, i.e.,
for all $R>0$ there exists $\sigma_R>0$ such that
\begin{equation}
\label{eqo3}
\langle F(u)-F(v), u-v\rangle \ge \sigma_R\|F(u)-F(v)\|^2,\qquad \forall u,v\in B(0,R).
\end{equation}
Moreover,
\begin{equation}
\label{eqsigma}
\sigma_R = \frac{1}{M(R)},\qquad R>0.
\end{equation}
\end{lemma}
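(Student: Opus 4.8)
The plan is to represent the finite difference $F(u)-F(v)$ as the action of a single nonnegative selfadjoint operator and then reduce the whole statement to the spectral inequality for that one operator. Since $F$ is Fréchet differentiable and $B(0,R)$ is convex, the fundamental theorem of calculus gives, for $u,v\in B(0,R)$,
\[
F(u)-F(v)=\int_0^1 F'\big(v+t(u-v)\big)(u-v)\,dt=Bw,\qquad w:=u-v,
\]
where $B:=\int_0^1 F'\big(v+t(u-v)\big)\,dt$. The entire argument rests on showing that $B$ is selfadjoint, nonnegative, and satisfies $\|B\|\le M(R)$; once this is established, \eqref{eqo3} with $\sigma_R=1/M(R)$ follows at once.

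First I would establish that $F'(v)$ is a nonnegative operator for every $v$ on the relevant ball. Setting $u=v+th$ in the monotonicity inequality \eqref{ceq2.1}, dividing by $t^2$, and letting $t\to0^+$ yields $\langle F'(v)h,h\rangle\ge0$ for all $h$. In Case (1) the operator $F'(v)$ is selfadjoint by hypothesis, hence nonnegative selfadjoint. In Case (2) I would invoke the standard fact that, in a complex Hilbert space, $\langle Ah,h\rangle\in\mathbb{R}$ for all $h$ forces $A=A^{*}$ (by polarization); since $\langle F'(v)h,h\rangle\ge0$ is real, $F'(v)$ is automatically selfadjoint, and again nonnegative. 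This is precisely why Case (2) need not assume selfadjointness. Consequently each integrand $A_t:=F'(v+t(u-v))$ is nonnegative selfadjoint with $\|A_t\|\le M(R)$ by \eqref{eqsoccon}, so $B=\int_0^1 A_t\,dt$ is selfadjoint (since $\langle Bx,y\rangle=\int_0^1\langle x,A_ty\rangle\,dt=\langle x,By\rangle$), nonnegative (since $\langle Bx,x\rangle=\int_0^1\langle A_tx,x\rangle\,dt\ge0$), and of norm at most $M(R)$.

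With $B$ in hand one has $\langle F(u)-F(v),u-v\rangle=\langle Bw,w\rangle$ and $\|F(u)-F(v)\|^2=\|Bw\|^2=\langle B^2w,w\rangle$, so \eqref{eqo3} is equivalent to $\langle Bw,w\rangle\ge M(R)^{-1}\langle B^2w,w\rangle$. Since $B$ is nonnegative selfadjoint with $\|B\|\le M(R)$, the spectral theorem gives $0\le\lambda\le M(R)$ on its spectrum, whence $\lambda^2\le M(R)\lambda$ and therefore $B^2\le M(R)B$ in the operator order; applying this to $w$ completes the proof with $\sigma_R=1/M(R)$.

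The step I expect to be the main obstacle, and the reason the argument must be organized in this way, is the handling of the square on the right-hand side of \eqref{eqo3}. The tempting route of applying the single-operator estimate $\langle A_tw,w\rangle\ge M(R)^{-1}\|A_tw\|^2$ pointwise in $t$ and then integrating fails: integration produces $\int_0^1\|A_tw\|^2\,dt$, whereas the target quantity is $\|\int_0^1 A_tw\,dt\|^2$, and the Cauchy–Schwarz (Jensen) inequality relates these two in the wrong direction. Collapsing the family $\{A_t\}_{t\in[0,1]}$ into the single operator $B$ \emph{before} invoking the spectral inequality is exactly what circumvents this difficulty.
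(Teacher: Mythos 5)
Your proposal is correct and follows essentially the same route as the paper: both represent $F(u)-F(v)$ as $J(u-v)$ with $J=\int_0^1 F'(v+t(u-v))\,dt$, observe that $J$ is selfadjoint and satisfies $0\le J\le M(R)$, and conclude via the operator inequality $\sigma_R J^2\le J$ (the paper writes this as $0\le (I-\sigma_R J)J$, which is identical to your $B^2\le M(R)B$). If anything, you supply details the paper leaves implicit --- the difference-quotient argument for $F'(v)\ge 0$ and the polarization argument for automatic selfadjointness in the complex case.
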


\begin{proof}
Fix $u,v\in B(0,R)$. One has
\begin{equation}
\label{eqo4}
F(u) - F(v) =J(u-v),\qquad J:= \int_0^1 F'(v+\xi (u-v))d\xi.
\end{equation}
By our assumption $J$ is a selfadjoint operator and 
\begin{equation}
\label{eqo5}
0\le J \le M(R),\qquad M(R):=\sup_{w\in B(0,R)} \|F'(w)\|.
\end{equation}
This and the selfadjointness of $J$ imply
\begin{equation}
\label{eqo6}
0\le J(I -\sigma_R J) = (I -\sigma_R J)J,,
\end{equation}
where $I$ is the identity operator in $H$ and $\sigma_R$ is defined by \eqref{eqsigma}. 
Thus,
\begin{equation}
\label{eqo7}
\begin{split}
\langle F(u)-F(v), u-v\rangle &= \langle J(u-v), u-v\rangle\\
&= \langle J(u-v),(I-\sigma_R J) (u-v)\rangle + \sigma_R \| J(u-v)\|^2\\
&= \langle \big{[}(I-\sigma_R J)J\big{]}(u-v), (u-v)\rangle + \sigma_R \| J(u-v)\|^2\\ 
&\ge \sigma_R \| J(u-v)\|^2 = \sigma_R \|F(u)-F(v)\|^2.
\end{split}
\end{equation}
This implies \eqref{eqo3}. Lemma~\ref{lemma2.10} is proved. 
\end{proof}

\begin{remark}
It follows from the proof of Lemma~\ref{lemma2.10} that if $F'(u)$ 
is self-addjoint and uniformly bounded, i.e., the constant $M=M(R)$ in \eqref{eqsoccon} is independent of $R$, 
 then $F$ is a $\sigma$-inverse monotone operator with $\sigma = \frac{1}{M}$.  
\end{remark}

\begin{lemma}
\label{lemma10}
Let $0<h=const$, $a(t)$ satisfy \eqref{eqsxa}, $a_n:=a(nh)$, and   
\begin{equation}
\label{eqkkk}
\phi_n = h\sum_{i=0}^n a_i, \qquad \phi(t):=\int_0^t a(s)ds.
\end{equation}
Then
\begin{align}
\label{eqbbb3xxxx}
e^{-\phi_{n-1}}\sum_{i=0}^{n-1}e^{\phi_{i}}(a_i - a_{i+1})
&\le e^{a(0)h}e^{-\phi(nh)}\int_0^{nh} e^{\phi(s)}|\dot{a}(s)|ds ,\qquad \forall n\ge 1,\\
\label{eqbbb3}
e^{-\phi_{n-1}}\sum_{i=0}^{n-1}e^{\phi_{i}}\frac{a_i - a_{i+1}}{a_i} 
&\le e^{a(0)h}e^{-\phi(nh)}\int_0^{nh} e^{\phi(s)}\frac{|\dot{a}(s)|}{a(s)}ds ,\qquad \forall n\ge 1.
\end{align}
\end{lemma}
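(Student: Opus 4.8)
The plan is to read both inequalities as comparisons between one-sided Riemann sums and the corresponding integrals, exploiting that $a$ is positive and decreasing by \eqref{eqsxa} (so $\dot a\le 0$). The engine of the argument is the identity
\[
a_i - a_{i+1} = a(ih) - a((i+1)h) = \int_{ih}^{(i+1)h}|\dot a(s)|\,ds,
\]
which converts the finite differences on the left-hand sides into integrals of $|\dot a|$ over the subintervals $[ih,(i+1)h]$. Once this is in hand, the whole proof reduces to comparing the exponential weights $e^{\phi_i-\phi_{n-1}}$ appearing in the sums with $e^{\phi(s)-\phi(nh)}$ for $s\in[ih,(i+1)h]$.

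First I would establish the two one-sided Riemann-sum estimates for $\phi$. Splitting off the $i=0$ term and viewing $h\sum_{j=1}^i a(jh)$ as a right Riemann sum of the decreasing integrand $a$, one gets
\[
\phi_i = a(0)h + h\sum_{j=1}^i a(jh) \le a(0)h + \int_0^{ih}a(s)\,ds = a(0)h + \phi(ih).
\]
Dually, viewing $\phi_{n-1}=h\sum_{j=0}^{n-1}a(jh)$ as a left Riemann sum of the same decreasing integrand over $[0,nh]$ gives the reverse estimate $\phi_{n-1}\ge \phi(nh)$. Combining these with the monotonicity of $\phi$ (increasing since $a>0$), so that $\phi(ih)\le \phi(s)$ for $s\ge ih$, yields the key pointwise bound
\[
e^{\phi_i-\phi_{n-1}} \le e^{a(0)h}\,e^{\phi(ih)-\phi(nh)} \le e^{a(0)h}\,e^{\phi(s)-\phi(nh)},\qquad s\in[ih,(i+1)h].
\]

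To finish \eqref{eqbbb3xxxx} I would multiply this bound by $|\dot a(s)|$, integrate over $s\in[ih,(i+1)h]$, use the identity for $a_i-a_{i+1}$ to recognize the left-hand terms, and sum over $i=0,\dots,n-1$; the subintervals tile $[0,nh]$, so the sum of integrals assembles into $\int_0^{nh}e^{\phi(s)-\phi(nh)}|\dot a(s)|\,ds$, which is exactly the right-hand side after factoring $e^{-\phi(nh)}$. For \eqref{eqbbb3} the only extra ingredient is that $a_i=a(ih)\ge a(s)$ for $s\in[ih,(i+1)h]$ (again by monotonicity), so that
\[
\frac{a_i-a_{i+1}}{a_i}=\frac{1}{a_i}\int_{ih}^{(i+1)h}|\dot a(s)|\,ds \le \int_{ih}^{(i+1)h}\frac{|\dot a(s)|}{a(s)}\,ds,
\]
after which the same weight bound and the same summation give the result. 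The only point requiring care is keeping the two Riemann-sum inequalities pointing in the correct (opposite) directions and correctly accounting for the leftover $i=0$ term, which is precisely what produces the constant factor $e^{a(0)h}$; everything else is routine bookkeeping, and I do not anticipate a genuine obstacle.
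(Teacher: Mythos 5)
Your proof is correct and follows essentially the same route as the paper: both arguments convert $a_i-a_{i+1}$ into $\int_{ih}^{(i+1)h}|\dot a(s)|\,ds$, use the monotonicity of $a$ to compare the discrete sums $\phi_i$ with the integral $\phi(\cdot)$ (producing the pointwise bound $e^{\phi_i-\phi_{n-1}}\le e^{a(0)h}e^{\phi(s)-\phi(nh)}$ on $[ih,(i+1)h]$, with the $e^{a(0)h}$ factor coming from a one-subinterval overshoot), and use $a_i\ge a(s)$ on $[ih,(i+1)h]$ for the second inequality. The only difference is cosmetic: the paper bounds the difference $\phi_{n-1}-\phi_i$ directly as a Riemann sum over $[(i+1)h,nh]$, while you bound $\phi_i$ and $\phi_{n-1}$ separately; both yield the same estimate.
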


\begin{proof}
{\it Let us prove \eqref{eqbbb3}. Inequality \eqref{eqbbb3xxxx} is obtained similarly.}

Since $a_n=a(nh)$ and $0<a(t)\searrow 0$, one gets
\begin{equation}
\label{eqf10}
\begin{split}
\phi_n - \phi_{i} &= \sum_{k=i+1}^{n}a_{k}h \ge \sum_{k=i+1}^{n}\int_{kh}^{(k+1)h}a(s)ds \\
&= \int_{(i+1)h}^{(n+1)h} a(s)ds
= \phi((n+1)h)-\phi((i+1)h),\qquad  0\le i\le n.
\end{split}
\end{equation}
This and the inequalities
$$
\phi((i+1)h) - \phi(s)  = \int_s^{(i+1)h}a(s)ds
\le \int_s^{(i+1)h}a(0)ds \le a(0)h,
$$
for all $s\in [ih,(i+1)h]$, imply
\begin{equation}
\label{eqnn2}
\begin{split}
-\phi_{n-1} + \phi_{i} &\le -\phi(nh) + \phi(s) +a(0)h,\qquad \forall s \in [ih,(i+1)h],
\end{split}
\end{equation}
where $0\le i\le n-1$. 

Since $0<a_n\searrow 0$ and $|\dot{a}(t)| = -a(t)$, one obtains
\begin{equation}
\label{eqnn1}
\frac{a_i-a_{i+1}}{a_i}
= \int_{ih}^{(i+1)h} \frac{|\dot{a}(s)|}{a_i}ds
\le \int_{ih}^{(i+1)h} \frac{|\dot{a}(s)|}{a(s)}ds.
\end{equation}
It follows from \eqref{eqnn1} and \eqref{eqnn2} that
\begin{equation}
\label{eqnn3}
\begin{split}
e^{-\phi_{n-1}}\sum_{i=0}^{n-1}e^{\phi_{i}}\frac{a_i - a_{i+1}}{a_i} 
&\le \sum_{i=0}^{n-1}\int_{ih}^{(i+1)h}e^{-\phi_{n-1}+\phi_i} \frac{|\dot{a}(s)|}{a(s)}ds\\
&\le e^{a(0)h} e^{-\phi(nh)}\int_0^{nh} e^{\phi(s)}\frac{|\dot{a}(s)|}{a(s)}ds,\qquad \forall n\ge 1.
\end{split}
\end{equation}
Lemma~\ref{lemma10} is proved.
\end{proof}

\begin{lemma}
\label{lemma11}
Let $0<h=const$, $a(t)$ satisfy \eqref{eqsxa}, $a_n=a(nh)$ and $\phi_n$ be as in \eqref{eqkkk}. Then
\begin{equation}
\label{eql6}
\lim_{n\to \infty} e^{\phi_{n-1}}a_n = \infty,
\end{equation}
and
\begin{equation}
\label{eql9}
M:=\lim_{n\to\infty} \frac{\sum_{i=0}^{n} e^{\phi_{i}}(a_i - a_{i+1})}{e^{\phi_{n}}a_{n+1}} = 0.
\end{equation}
\end{lemma}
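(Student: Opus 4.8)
The plan is to pass to the continuous level, where the hypotheses \eqref{eqsxa} are most transparent, prove the two assertions for $a(t)$ and $\phi(t)$, and then transfer back to the sums $\phi_n$ using the comparison already established in Lemma~\ref{lemma10}. I would first dispose of \eqref{eql6}. Set $g(t):=1/a(t)$; since $\dot a\le0$ one has $\dot g(t)=|\dot a(t)|/a^2(t)=\nu(t)$, so by \eqref{eqsxa} the derivative $\dot g=\nu$ is nonincreasing and $g$ is concave. Being increasing and concave, $g(t)\le g(0)+\nu(0)t$, hence $a(t)\ge\big(g(0)+\nu(0)t\big)^{-1}$ and therefore $\phi(t)=\int_0^t a(s)\,ds\to\infty$. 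A direct computation gives $\frac{d}{dt}\ln\big(e^{\phi(t)}a(t)\big)=a(t)+\dot a(t)/a(t)=a(t)\big(1-\nu(t)\big)$; choosing $T$ with $\nu(t)\le\frac12$ for $t\ge T$, the right side is $\ge\frac12 a(t)$, and integrating together with $\phi(t)\to\infty$ yields $e^{\phi(t)}a(t)\to\infty$. Since $a$ is decreasing, $\phi_{n-1}=h\sum_{i=0}^{n-1}a_i\ge\int_0^{nh}a(s)\,ds=\phi(nh)$, so $e^{\phi_{n-1}}a_n\ge e^{\phi(nh)}a(nh)\to\infty$, which is \eqref{eql6}.

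For \eqref{eql9} I would apply inequality \eqref{eqbbb3xxxx} of Lemma~\ref{lemma10} with $n+1$ in place of $n$ and divide by $a_{n+1}=a\big((n+1)h\big)$ to obtain, with $t_n:=(n+1)h$,
\[
0\le\frac{\sum_{i=0}^{n}e^{\phi_i}(a_i-a_{i+1})}{e^{\phi_n}a_{n+1}}\le e^{a(0)h}\,\frac{\int_0^{t_n}e^{\phi(s)}|\dot a(s)|\,ds}{a(t_n)e^{\phi(t_n)}}.
\]
Thus it suffices to prove the continuous statement $\lim_{t\to\infty}\big(a(t)e^{\phi(t)}\big)^{-1}\int_0^t e^{\phi(s)}|\dot a(s)|\,ds=0$, after which the squeeze theorem gives $M=0$.

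The crux is this continuous estimate. Writing $|\dot a|=\nu a^2$ and $G(s):=a(s)e^{\phi(s)}$, one computes $G'(s)=a^2(s)\big(1-\nu(s)\big)e^{\phi(s)}$, so $G$ is increasing for $s\ge T$ (where $\nu\le\frac12$) and $e^{\phi(s)}|\dot a(s)|=\psi(s)G'(s)$ with $\psi:=\nu/(1-\nu)\searrow0$. An integration by parts on $[T,t]$, using that $G$ is increasing and $\psi'\le0$, gives $\int_T^t\psi G'\le\psi(T)\big(G(t)-G(T)\big)\le\psi(T)G(t)$. Splitting $\int_0^t=\int_0^T+\int_T^t$, the first piece is a fixed constant divided by $G(t)=a(t)e^{\phi(t)}\to\infty$, and the second is at most $\psi(T)$; hence $\limsup_{t\to\infty}$ of the ratio is $\le\psi(T)$ for every $T$, and $\psi(T)\to0$ forces the limit to be $0$.

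The step I expect to be the main obstacle is precisely this last estimate: one must extract decay from $\nu\searrow0$ with no quantitative rate available, and the device that makes it work is integrating by parts against the increasing factor $G=ae^{\phi}$ rather than bounding $|\dot a|$ termwise, which is what lets the small quantity $\psi(T)$ appear as a uniform control on the $\limsup$. The concavity argument for \eqref{eql6} and the reduction via Lemma~\ref{lemma10} are then routine.
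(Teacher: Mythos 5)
Your proof is correct, but it takes a genuinely different route from the paper's for both assertions. For \eqref{eql6} the paper stays discrete: it applies a discrete L'Hospital (Stolz--Ces\`aro) argument to show $\phi_{n-1}/\ln(1/a_n^2)\to\infty$ (see \eqref{xex6}), deduces $\phi_{n-1}\ge\ln(1/a_n^2)$ for large $n$, and concludes $e^{\phi_{n-1}}a_n\ge 1/a_n\to\infty$; you instead work at the continuous level, using the concavity of $1/a$ to get $\phi(t)\to\infty$ and the identity $\tfrac{d}{dt}\ln\big(a(t)e^{\phi(t)}\big)=a(t)\big(1-\nu(t)\big)$ to get $a(t)e^{\phi(t)}\to\infty$, then transfer via $\phi_{n-1}\ge\phi(nh)$ (the paper's own \eqref{eqvnhc}). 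For \eqref{eql9} the paper again applies Stolz--Ces\`aro directly to the discrete ratio and controls the resulting quotient with the Taylor bound $e^{-ha_n}\le 1-ha_n+(ha_n)^2/2$ together with \eqref{eqqe}; you reduce to a continuous integral via \eqref{eqbbb3xxxx} of Lemma~\ref{lemma10} and then prove $\big(a(t)e^{\phi(t)}\big)^{-1}\int_0^t e^{\phi}|\dot a|\,ds\to 0$ by the split $\int_0^T+\int_T^t$ and the monotone bound $\int_T^t\psi G'\le\psi(T)G(t)$. Your continuous limit is a close cousin of the paper's \eqref{eqff3} (proved there by L'Hospital), and indeed L'Hospital would also dispatch your ratio directly since $\tfrac{d}{dt}\big(ae^{\phi}\big)=a^2(1-\nu)e^{\phi}$; your splitting argument is a hands-on substitute that sidesteps checking L'Hospital's hypotheses. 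What the paper's route buys is a self-contained discrete argument that does not invoke Lemma~\ref{lemma10}; what yours buys is transparency --- it avoids the somewhat delicate algebra of \eqref{eql14} and the implicit requirement in Stolz--Ces\`aro that the denominator $e^{\phi_n}a_{n+1}$ be eventually monotone --- at the price of importing Lemma~\ref{lemma10} and re-deriving a continuous asymptotic. One cosmetic remark: your bound $\int_T^t\psi G'\le\psi(T)\int_T^t G'$ follows immediately from $\psi\le\psi(T)$ and $G'\ge0$ on $[T,t]$, so no integration by parts (and hence no differentiability of $\nu$ beyond what \eqref{eqsxa} provides) is actually needed.
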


\begin{proof}
{\it Let us first prove \eqref{eql6}.}

From \eqref{eqsxa} and \eqref{eqkkk}, one gets
\begin{equation}
\label{eqsxaz}
\lim_{n\to\infty} \phi_n =  \lim_{n\to\infty} h\sum_{i=0}^n a_i \ge \int_0^\infty a(s)ds 
\ge \int_0^\infty  \frac{1}{\nu(0)} \frac{-\dot{a}(s)}{a(s)}ds=\infty.
\end{equation}
We claim that if $n>0$ is sufficiently large, then the following inequality holds:
\begin{equation}
\label{xex5}
\phi_{n-1} \ge  \ln \frac{1}{a^2_n}.
\end{equation}
Indeed, using a discrete analog of L'Hospital's rule, the relation  
$\ln(1+x) = x + o(x)$, and \eqref{eqqe}, one gets
\begin{equation}
\label{xex6}
\begin{split}
\lim_{n\to\infty} \frac{\phi_{n-1}}{\ln \frac{1}{a^2_{n}}} 
&= \lim_{n\to\infty} \frac{\phi_{n} - \phi_{n-1}}{\ln \frac{1}{a^2_{n+1}} - \ln \frac{1}{a^2_n}}
= \lim_{n\to\infty}\frac{a_nh}{4\ln(1+\frac{a_n-a_{n+1}}{a_{n+1}})}\\
&=\lim_{n\to\infty} \frac{h}{4\frac{a_n-a_{n+1}}{a_{n+1}a_n}} = \infty.
\end{split}
\end{equation}
This implies that \eqref{xex5} holds for all $n\ge \tilde{N}$ provided that $\tilde{N}>0$ is sufficiently large.
It follows from inequality \eqref{xex5} that 
\begin{equation}
\label{xex7}
\lim_{n\to\infty} a_ne^{\phi_{n-1}} \ge \lim_{n\to\infty} a_n e^{\ln \frac{1}{a^2_{n}}} = \lim_{n\to\infty}\frac{a_n}{a_{n}^2} = \infty.
\end{equation}

{\it Let us prove \eqref{eql9}.} 

Since 
$a_ne^{\phi_{n-1}}\to \infty$ as $n\to\infty$, by \eqref{eql6},
relation \eqref{eql9} holds if the numerator 
$\sum_{i=0}^{n} e^{\phi_{i}}(a_i - a_{i+1})$ in \eqref{eql9} is bounded. 
Otherwise, a discrete analog of L'Hospital's rule yields:
\begin{equation}
\label{eql14}
\begin{split}
M&=\lim_{n\to\infty} \frac{e^{\phi_{n}}(a_n-a_{n+1})}
{e^{\phi_{n}}a_{n+1} - e^{\phi_{n-1}}a_{n}} 
=\lim_{n\to\infty} \frac{a_n-a_{n+1}}
{a_{n+1} - a_{n}e^{-ha_{n}}}\\
&\le \lim_{n\to\infty} \frac{1}{\frac{h a_na_{n}}{(a_n-a_{n+1})}(1-\frac{ha_{n}}{2}) -1}=0.
\end{split}
\end{equation}
Here, we have used \eqref{eqvv}, \eqref{eqqe}, relation $\lim_{n\to\infty}a_n=0$, and the following inequality:
$$
e^{-ha_{n}} \le 1 - ha_{n} + \frac{(ha_{n})^2}{2},\qquad \forall n\ge 0.
$$ 

Lemma~\ref{lemma11} is proved. 
\end{proof}

\section{Main results}
\label{mainsec}

\subsection{An iterative scheme}

Let
$0<a(t)\in C^1(\mathbb{R}_+)$  
satisfy the following conditions: (see also \eqref{eqsxa})
\begin{equation}
\label{eqsxa1}
0< a(t)\searrow 0,
\qquad \nu(t):=\frac{|\dot{a}(t)|}{a^2(t)}\searrow 0,\qquad t\ge 0.
\end{equation}
Let $0<h=const\le 1$ and $a_n:=a(nh),n\ge 0$. Consider the following iterative scheme 
\begin{equation}
\label{3eq12}
u_{n+1} =u_n - \gamma_n[F(u_n)+a_nu_n-f_\delta],\qquad u_0=\tilde{u}_0,
\end{equation}
where $\tilde{u}_0\in H$ and
\begin{equation}
\label{eqk1}
0< h\le \gamma_n \le \frac{2}{\sigma^{-1}_R + 2a_n}, \qquad n\ge 0,
\end{equation}
where $\sigma_R$ is the constant in \eqref{ceq2} and $0<R=const$. 

\begin{theorem}
\label{mainthm}
Let $a(t)$ 
satisfy \eqref{eqsxa1}. 
Assume that $F:H\to H$ is a locally $\sigma$-inverse monotone operator.
Assume that equation $F(u)=f$ has a solution, possibly nonunique. 
Let $f_\delta$ be such that $\|f_\delta-f\|\le \delta$ and  
 $u_0$ be an element of $H$ 
satisfying the inequality:
\begin{equation}
\label{eqj1}
\|F(u_0)-f_\delta\|>C\delta^\zeta,
\end{equation}
where $C>0$ and $\zeta\in (0, 1]$ are constants satisfying 
$C\delta^\zeta>\delta$. 
Let $0<R$ be sufficiently large and $0<h$ and $0<\gamma_n$ satisfy 
\eqref{eqk1}. Let $u_n$ be defined by the iterative process 
\eqref{3eq12}. 
Then there exists a unique $n_\delta$ such that 
\begin{equation}
\label{2eq3}
\|F(u_{n_\delta})-f_\delta\|\le C\delta^\zeta,
\quad \|F(u_n)-f_\delta\|>C\delta^\zeta,\qquad 0\le n < n_\delta,
\end{equation}
where $C$ and $\zeta$ are constants from \eqref{eqj1}. 
\end{theorem}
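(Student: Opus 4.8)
The plan is to track the quantity $\|F(u_n)-f_\delta\|$ as $n$ increases and show that, while it exceeds $C\delta^\zeta$, the iterates $u_n$ stay in a fixed ball $B(0,R)$ where the contraction estimate of Lemma~\ref{lemma2} applies, and that this quantity must eventually drop below $C\delta^\zeta$, so that a first crossing index $n_\delta$ exists. The \emph{uniqueness} of $n_\delta$ is immediate once existence is shown: $n_\delta$ is by definition the \emph{first} index at which $\|F(u_n)-f_\delta\|\le C\delta^\zeta$, and $C\delta^\zeta>\delta$ guarantees the condition in \eqref{eqj1} holds strictly at $n=0$, so $n_\delta\ge 1$ is well defined as a minimum of a nonempty set of integers.

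First I would compare the iterate $u_n$ with the regularized solution $V_n$ solving \eqref{3eq2}. Writing $z_n:=u_n-V_n$, I subtract the defining equation for $V_n$ from the iteration \eqref{3eq12} to obtain a recursion of the form
\begin{equation*}
z_{n+1}=z_n-\gamma_n\bigl[F(u_n)-F(V_n)+a_n z_n\bigr]+\gamma_n a_n(V_{n+1}-V_n)\ \text{(shift terms)}.
\end{equation*}
Setting $a=a_n$ and absorbing the factor $(1-\gamma_n a_n)$, the map $z\mapsto z-\tfrac{\gamma_n}{1-\gamma_n a_n}[F(u_n)-F(V_n)]$ is nonexpansive by Lemma~\ref{lemma2}, since \eqref{eqk1} is exactly the hypothesis \eqref{eqg1} with $a=a_n$. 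This yields a bound
\begin{equation*}
\|z_{n+1}\|\le(1-\gamma_n a_n)\|z_n\|+\|V_{n+1}-V_n\|,
\end{equation*}
and then \eqref{eqjkl7} controls $\|V_{n+1}-V_n\|$ by $\tfrac{a_n-a_{n+1}}{a_n}\|V_{n+1}\|$. Iterating this linear recursion with the integrating factor $e^{\varphi_n}$ (or $e^{\phi_n}$) and invoking the summation estimate \eqref{auxieq3} of Lemma~\ref{lemauxi2} gives a uniform bound $\|z_n\|\le C_1 a_n\|V_n\|$ plus a term of order $\delta/a_n$, so that $u_n$ remains in $B(0,R)$ for $R$ chosen large enough (using $\|V_n\|\le\|y\|+\delta/a_n$ from \eqref{2eq1} and $a_n\|V_n\|\le\|F(0)-f_\delta\|$ from \eqref{eqstar}). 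Keeping the iterates in the ball is precisely what licenses the repeated use of Lemma~\ref{lemma2}, so this bootstrapping — staying in $B(0,R)$ to justify the very estimate that keeps us in $B(0,R)$ — is the delicate point.

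Next I would estimate $\|F(u_n)-f_\delta\|$ via the triangle inequality,
\begin{equation*}
\|F(u_n)-f_\delta\|\le\|F(u_n)-F(V_n)\|+\|F(V_n)-f_\delta\|\le\sigma_R^{-1}\|z_n\|+a_n\|V_n\|,
\end{equation*}
using \eqref{eq*}. By Lemma~\ref{lemma3}, $\limsup_{n\to\infty}\|F(V_n)-f_\delta\|\le\delta<C\delta^\zeta$, and the $z_n$-contribution is shown above to be controlled by terms that vanish relative to $C\delta^\zeta$ for large $n$ (here the faster decay rate of $a_n$ built into \eqref{eqsxa1} is used, together with Lemma~\ref{lemma11} to absorb the discrepancy-versus-regularization competition). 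Hence for all sufficiently large $n$ one has $\|F(u_n)-f_\delta\|<C\delta^\zeta$, which forces the set $\{n:\|F(u_n)-f_\delta\|\le C\delta^\zeta\}$ to be nonempty; its minimum is the desired $n_\delta$, and \eqref{2eq3} follows by the minimality.

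The main obstacle I anticipate is the simultaneous control required in the first step: the contraction estimate from Lemma~\ref{lemma2} is only \emph{local}, valid on $B(0,R)$, yet it is the tool I need to prove the iterates never leave $B(0,R)$. Resolving this requires an inductive argument in which one assumes $u_0,\dots,u_n\in B(0,R)$, derives the bound on $\|z_{n+1}\|$, and then checks $\|u_{n+1}\|\le\|z_{n+1}\|+\|V_{n+1}\|\le R$ — the choice of $R$ sufficiently large, together with the smallness conditions \eqref{eqkkk2} on $a(0)h$ and $\nu(0)$ that make Lemma~\ref{lemauxi2} applicable, must be arranged so that this induction closes uniformly in $n$ and in $\delta$.
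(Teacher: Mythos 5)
Your overall strategy --- comparing $u_n$ with the regularized solution $V_n$, invoking Lemma~\ref{lemma2} as a nonexpansiveness estimate on $B(0,R)$, controlling $\|V_{n+1}-V_n\|$ by \eqref{eqjkl7}, and closing an induction that keeps the iterates in the ball --- is the same as the paper's. But your existence step has a genuine gap. You propose to conclude that $\|F(u_n)-f_\delta\|<C\delta^\zeta$ ``for all sufficiently large $n$'' by letting $n\to\infty$ and using Lemma~\ref{lemma3}. This is incompatible with the ball-containment induction you rely on: by Lemma~\ref{lemma5} the sequence $\|V_n\|$ is increasing, and since $a_n\|V_n\|=\|F(V_n)-f_\delta\|$ need not tend to zero for fixed $\delta$ (the equation $F(u)=f_\delta$ is in general unsolvable for noisy data), $\|V_n\|$ is in general unbounded as $n\to\infty$. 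Consequently no fixed $R$ can contain all the iterates (whose distance to $V_n$ you bound by $w_0+(K-1)\|V_n\|$), so your induction cannot close ``uniformly in $n$,'' and the asymptotic argument is unavailable. The paper resolves exactly this tension by working on a \emph{finite horizon}: it first chooses $T$ so that the a priori bound \eqref{eqff1} --- which involves only $V_\delta(t)$, $a(t)$ and the data, not the iterates --- already falls below $C\delta^\zeta$ on $[T,T+1]$, then sets $R:=\|V_\delta(T)\|K+w_0$, chooses $h$ from this $R$, runs the induction only for $n\le N$ with $Nh\le T$ (where $\|V_n\|\le\|V_\delta(T)\|$ by Lemma~\ref{lemma5}), and verifies the crossing at the specific index $n=N$ via \eqref{xhoqnei}. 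This order of quantifiers ($T$ first, then $R$, then $h$ and $N$) is the essential idea your sketch is missing.

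Two further points in the same step. First, the quantitative bound you have for $z_n=u_n-V_n$ does not make the error term vanish relative to the gap $C\delta^\zeta-\delta$: with $\|z_n\|\le C_1a_n\|V_n\|$ one gets $\sigma_R^{-1}\|z_n\|\le\sigma_R^{-1}C_1\|F(V_n)-f_\delta\|$, which tends to a limit that may be comparable to $\delta$, while $C\delta^\zeta-\delta$ can be an arbitrarily small fraction of $\delta$ (the theorem permits $\zeta=1$ and $C$ barely larger than $1$). The paper instead bounds $\|F(u_n)-F(V_n)\|$ by $\psi_n=\|F(u_n)+a_nu_n-f_\delta\|$ using plain monotonicity (inequality \eqref{beq36}), iterates the recursion \eqref{eqpp11}, substitutes $\|V_i\|\le\Gamma/a_i$ from \eqref{eqhelloxv}, and uses Lemma~\ref{lemma10} together with \eqref{eqff3} to make the resulting error terms tend to zero as $T\to\infty$, hence smaller than $C\delta^\zeta-\delta$ however small that gap is. Second, Lemma~\ref{lemauxi2}, which you invoke, requires the conditions \eqref{eqkkk2} on $a(0)h$ and $\nu(0)$; these are hypotheses of Theorem~\ref{theorem3} (via \eqref{eqxtr4}) but \emph{not} of Theorem~\ref{mainthm}, so that lemma is not available here. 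Your treatment of uniqueness is fine.
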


\begin{remark}
 In \cite{R550} the existence of $n_\delta$ was proved for the choice 
$a_n = d/(c+n)^b,$
 where $b\in (0,1/2)$ and $d>0$ is sufficiently large. However,
it was not quantified in \cite{R550} how large $d$ should be. In this paper 
the existence of
$n_\delta$ is proved for $a_n = d/(c+nh)^b$, for any $d>0,\, c>1,
\, b\in(0,1)$, and $0<h\le \gamma_n$.  This
 guarantees the existence of $n_\delta$ for small $a(0)$ or $d$.
Moreover, our
 condition on $b$ allows $a_n$ to decay faster
than the corresponding sequence $a_n$ in \cite{R550} decays.
Having smaller $a(0)$ and larger $b$ reduces the
cost of computations.

Inequality \eqref{eqj1} is a very natural assumption. Indeed, if it does not hold 
and $\|u_0\|$ is not "large", then $u_0$ can be already considered as an approximate solution to \eqref{aeq1}. 

In general, 
if $R$ in \eqref{eqk1} is large, then the stepsize $h$ in the iterative scheme \eqref{3eq12} is small. 
Consequently, the computation time will be large since the rate of decay of $(a_n)_{n=1}^\infty$ 
is slow.
However, if $F$ is $\sigma$-inverse monotone, i.e., $\sigma_R$ is independent of $R$, then 
it is easy to choose $h$ and $\gamma_n$ to satisfy \eqref{eqk1}.
\end{remark}

\begin{proof}[Proof of Theorem~\ref{mainthm}] 

Let us prove first that there exists $R>0$ such that the sequence $(u_n)_{n=1}^{n_\delta}$ remains inside 
the ball $B(0,R)$. 

We assume without loss of generality that $\delta\in (0,1)$. 
It follows from \eqref{eqstar} and the triangle inequality that
\begin{equation}
\label{inveqzxc5}
a_n\|V_n\| \le \|F(0)-f_\delta\| \le \|F(0)-f\| + \|f_\delta - f\| \le \Gamma,\qquad \forall n\ge 0,\quad \forall \delta \in (0,1),
\end{equation}
where
$$
\Gamma:= \|F(0)-f\| + 1.
$$
From \eqref{inveqzxc5} 
one obtains
\begin{equation}
\label{eqhelloxv}
\|V_n\|\le  \frac{\Gamma}{a_n},\qquad \forall n\ge 0.
\end{equation}

Let $\phi(t)$ be defined as follows (see also \eqref{eqkkk})
\begin{equation}
\label{eqiv}
\phi(t) = \int_0^t a(s)ds.
\end{equation}
From the last inequality in \eqref{eqsxaz} one gets
\begin{equation}
\label{eqpia1}
\lim_{t\to\infty}\phi(t) = \infty.
\end{equation}
We claim that
\begin{equation}
\label{eqff3}
\lim_{t\to\infty} \frac{\int_0^t e^{\phi(s)}\frac{|\dot{a}(s)|}{a(s)}ds}{e^{\phi(t)}} = 0.
\end{equation}
Indeed, if the denominator $\int_0^t e^{\phi(s)}\frac{|\dot{a}(s)|}{a(s)}ds$ is bounded, then 
\eqref{eqff3} is valid because $\lim_{t\to\infty}\phi(t) = \infty$. 
Otherwise L'Hospital's rule and \eqref{eqsxa1} yield
\begin{equation}
\label{eqff3xc}
\lim_{t\to\infty} \frac{\int_0^t e^{\phi(s)}\frac{|\dot{a}(s)|}{a(s)}ds}{e^{\phi(t)}} = 
\lim_{t\to\infty} \frac{e^{\phi(t)}|\dot{a}(t)|}{e^{\phi(t)}a^2(t)} = 0.
\end{equation}

Let
\begin{equation}
\label{eqjkl4}
\qquad K = 1 + \sup_{t\ge 0} e^{a(0)}e^{-\phi(t)} \int_0^t e^{\phi(s)} \frac{|\dot{a}(s)|}{a(s)}ds. 
\end{equation}
It follows from \eqref{eqff3} that $K$ is bounded. 

Let 
$V_\delta(t)$ solves the equation
\begin{equation}
\label{eq}
F(V_\delta(t)) + a(t)V_\delta(t) - f_\delta = 0.
\end{equation}
It follows from Lemma~\ref{lemma5} that
\begin{equation}
\label{eqjvcc}
\|F(V_\delta(t)) - f_\delta\| \le \|F(V_n) - f_\delta\|,\qquad  
\forall t\ge nh.
\end{equation}
Relation \eqref{eqff3}, Lemma~\ref{lemma3} and \eqref{eqjvcc} imply that there exists $T>0$ such that 
the following inequality holds $\forall t\in [T,T+1]$:
\begin{equation}
\label{eqff1}
\begin{split}
\|F(V_\delta(t)) &- f_\delta\| + e^{-\phi(t)}\psi_0 \\
&+  e^{a(0)} e^{-\phi(t)}\int_0^t e^{\phi(s)}|\dot{a}(s)| \bigg{(}\frac{\Gamma K}{a(s)}+ w_0\bigg{)} ds < C\delta^\zeta,
\end{split}
\end{equation}
where 
\begin{equation}
\label{eqjpeg}
\psi_0 = \|F(u_0)+a_0 u_0 -f_\delta \|,\qquad w_0 = \|u_0 - V_0\|. 
\end{equation}
Let
\begin{equation}
\label{eqff6}
R := \|V_\delta(T)\|K + w_0.
\end{equation}

Let 
$$
0<h\le \min(1,\frac{2}{\sigma_R^{-1}+2a(0)}).
$$ 
Let $N$ be the largest integer such that $Nh\le T$. 
Let us prove by induction that 
the sequence $(u_n)_{n=1}^{N}$ stays in side the ball $B(0,R)$. 
To prove this it suffices to prove that
\begin{equation}
\label{eqpia2}
\|u_n - V_n\| \le w_0+\|V_n\|(K-1),\qquad n=0,1,...,N.
\end{equation}
Inequality \eqref{eqpia2} holds for $n=0$, by \eqref{eqjpeg}. 
Assume that \eqref{eqpia2} holds for $0\le n<N$. Let us prove that \eqref{eqpia2} also holds for $n+1$. 

It follows from equation \eqref{3eq12} that
\begin{equation}
\label{eqxvzc}
\begin{split}
u_{n+1} - V_n &= u_n - V_n - \gamma_n\bigg{[}F(u_n) + a_n u_n - F(V_n) - a_nV_n\bigg{]}\\
&= (1-\gamma_n a_n)\bigg{[}u_n - V_n -\frac{\gamma_n}{1-\gamma_na_n}\big{(}F(u_n) - F(V_n)\big{)}\bigg{]}.
\end{split}
\end{equation}
This and Lemma~\ref{lemma2} imply
\begin{equation}
\label{eq**}
\|u_{n+1}-V_n\|\le \|u_n - V_n\|(1 - \gamma_n a_n). 
\end{equation}
From \eqref{eq**}, the triangle inequality and \eqref{eqjkl7} one gets 
\begin{equation}
\label{eqjvc}
\begin{split}
\|u_{n+1}-V_{n+1}\|&\le \|u_n - V_n\|(1 - \gamma_n a_n) + \|V_{n+1} - V_n\|\\
&\le \|u_n - V_n\| e^{-ha_n} + \frac{a_n - a_{n+1}}{a_n}\|V_{n+1}\|.
\end{split}
\end{equation}
Here we have used the inequality: $1 - \gamma_n a_n\le  e^{-ha_n}$ where $0<h\le \gamma_n$ and $n\ge 0$. 
From \eqref{eqjvc} one gets by induction the following inequality:
\begin{equation}
\label{eq***}
\|u_{n+1}-V_{n+1}\| 
\le w_0e^{-\phi_n} + e^{-\phi_n}\sum_{i=1}^n e^{\phi_i}\frac{a_i - a_{i+1}}{a_i}\|V_{i+1}\|,
\end{equation}
where
$$
\phi_{n} = \sum_{i=0}^n ha_i,\qquad n\ge 0.
$$
From \eqref{eq***}, Lemma~\ref{lemma5}, Lemma~\ref{lemma10}, and \eqref{eqjkl4}, one obtains
\begin{equation}
\label{eqjlk2}
\begin{split}
\|u_{n+1}-V_{n+1}\| &\le w_0 + \|V_{n+1}\|e^{-\phi_n}\sum_{i=1}^n e^{\phi_i}\frac{a_i - a_{i+1}}{a_i}\\
&\le w_0 + \|V_{n+1}\|e^{a(0)}e^{-\phi((n+1)h)}\int_0^{(n+1)h} e^{\phi(s)}\frac{|\dot{a}(s)|}{a(s)}ds\\
&\le w_0 + \|V_{n+1}\|(K-1),
\end{split}
\end{equation}
where $\phi(t)$ is defined by \eqref{eqiv}.

Hence, \eqref{eqpia2} holds for $n+1$. 
Thus, by induction \eqref{eqpia2} holds for $0\le n\le N$. 
Inequalities \eqref{eqpia2}, \eqref{eqff6}, and the inequality $\|V_n\|\le \|V_\delta(T)\|$, $\forall n\le N$ (see Lemma~\ref{lemma5}),  
 imply that the sequence $(u_n)_{n=1}^{N}$ remains inside the 
ball $B(0,R)$

{\it Let us prove the existence of $n_\delta$.} 

Denote $g_n:=g_{n,\delta}:=F(u_n) +a_nu_n -f_\delta$. 
Equation \eqref{3eq12} can be rewritten as  
\begin{equation}
\label{eqpp3}
u_{n+1}-u_n = -\gamma_n g_n,\qquad n\ge 0.
\end{equation}
This implies
\begin{equation}
\label{beq32}
\begin{split}
g_{n+1} =& g_n + a_{n+1}(u_{n+1}-u_n)+ F(u_{n+1})-F(u_n)+(a_{n+1}-a_n)u_n\\
=& -\frac{u_{n+1}-u_n}{\gamma_n} + a_{n+1}(u_{n+1}-u_n)+ F(u_{n+1})-F(u_n)\\
& +(a_{n+1}-a_n)u_n\\
=& -\frac{1-\gamma_n a_{n+1}}{\gamma_n}\bigg{[}(u_{n+1}-u_n) - \frac{\gamma_n}{1-\gamma_n a_{n+1}}\big{(}F(u_{n+1})-F(u_n)\big{)}\bigg{]}\\
& +(a_{n+1}-a_n)u_n.
\end{split}
\end{equation} 
Denote $\psi_n=\|g_n\|$. 
It follows from \eqref{beq32} that
\begin{equation}
\label{2eq5}
\begin{split}
\psi_{n+1} \le & \frac{1-\gamma_n a_{n+1}}{\gamma_n}\bigg{\|}(u_{n+1}-u_n) - \frac{\gamma_n}{1-\gamma_n a_{n+1}}\big{(}F(u_{n+1})-F(u_n)\big{)}\bigg{\|}\\
 &+ (a_n -a_{n+1})\|u_n\|.
\end{split}
\end{equation}
From Lemma~\ref{lemma2} and \eqref{eqpp3} we get the following inequality:
\begin{equation}
\label{eq02}
\begin{split}
\bigg{\|}(u_{n+1}-u_n) - \frac{\gamma_n}{1-\gamma_n a_{n+1}} &\big{(}F(u_{n+1})-F(u_n)\big{)}\bigg{\|} 
\le \|u_{n+1}-u_{n}\|=\gamma_n \psi_n,
\end{split}
\end{equation}
for all $0\le n\le N-1$. 
From \eqref{eq02} and \eqref{2eq5} one gets
\begin{equation}
\label{eqpp11}
\psi_{n+1} \le (1-\gamma_n a_{n+1})\psi_n + (a_n -a_{n+1})\|u_n\|.
\end{equation}
Note that one has: $1- h a_{n+1} \le e^{-a_{n+1}h}$, $\forall n\ge 0$. This, 
inequalities \eqref{eqpp11} and \eqref{eqpia2} imply
\begin{equation}
\label{beq39}
\begin{split}
\psi_{n+1} 
&\le e^{-a_{n+1}h}\psi_n + (a_n -a_{n+1})\big{(}\|V_n\|K + w_0\big{)},\qquad 0\le n\le N-1.
\end{split}
\end{equation}
From inequality \eqref{beq39} one gets by induction the following inequality:
\begin{equation}
\label{1eq17}
\psi_{n}\le \psi_{0}e^{-\phi_{n-1}} + e^{-\phi_{n-1}}
\sum_{i=0}^{n-1} e^{\phi_{i}}(a_i -a_{i+1})\big{(}\|V_i\|K + w_0\big{)},\qquad 1\le n\le N,
\end{equation}
where $\phi_n$ is defined by \eqref{eqiv}.

Since $F(V_n) + a_nV_n -f_\delta=0$, one gets
\begin{equation}
\label{xex1}
g_n=F (u_n) -F(V_n)+a_n(u_n-V_n).
\end{equation}
This and \eqref{ceq2.1} imply
\begin{equation}
\label{beq35}
a_n\|u_n - V_n\|^2 \le \langle g_n, u_n-V_n \rangle \le 
\|u_n - V_n\|\psi_n,
\end{equation}
and
\begin{equation}
\label{beq36}
\|F(u_n)-F(V_n)\|^2\le \langle g_n, F(u_n)-F(V_n) \rangle
\le \psi_n\|F(u_n)-F(V_n)\|.
\end{equation}
Inequalities \eqref{beq35} and \eqref{beq36} imply 
\begin{equation}
\label{1eq16}
a_n\|u_n-V_n\|\le \psi_n,\quad \|F(u_n)-F(V_n)\|\le \psi_n.
\end{equation}


From \eqref{1eq17}, and 
\eqref{1eq16}, one gets, for $0\le n\le N$, the following inequality:
\begin{equation}
\label{eqj5}
\|F(u_n)-F(V_n)\| \le 
\psi_{0}e^{-\phi_{n-1}} + e^{-\phi_{n-1}}\sum_{i=0}^{n-1} 
e^{\phi_{i}}(a_i -a_{i+1})\big{(}\|V_i\|K + w_0\big{)}.
\end{equation}
This, the triangle inequality,  
%
%
and inequalities \eqref{eqhelloxv} imply
\begin{equation}
\label{inveqr9}
\begin{split}
\|F(u_n)-f_\delta\| \le &\|F(V_n)-f_\delta\|+ \psi_{0}e^{-\phi_{n-1}}\\
&+ e^{-\phi_{n-1}}\sum_{i=0}^{n-1}e^{\phi_{i}}(a_i - a_{i+1})\bigg{(}\frac{\Gamma K}{a_i} + w_0\bigg{)},
\qquad  0\le n\le N.
\end{split}
\end{equation}
From \eqref{eqiv} and the fact that $a(t)$ is decreasing one gets
\begin{equation}
\label{eqvnhc}
\phi_{n-1} = \sum_{i=0}^{n-1} ha(ih)\ge \int_0^{nh} a(s)ds=\phi(nh),\qquad n\ge 1.
\end{equation}

Inequality \eqref{inveqr9} with $n=N$, equation \eqref{eqff6}, the inequality $T-1< Nh \le T$, 
by the definition of $N$, and Lemma~\ref{lemma10} imply
\begin{equation}
\label{xhoqnei}
\begin{split}
\|F(u_N)-f_\delta\| \le &\|F(V_\delta(Nh))-f_\delta\|+ \psi_{0}e^{-\phi(Nh)}\\
&+ e^{a(0)}e^{-\phi(Nh)}\int_{0}^{Nh}e^{\phi(s)}|\dot{a}(s)|\bigg{(}\frac{\Gamma K}{a(s)} + w_0\bigg{)}ds  < C\delta.
\end{split}
\end{equation}
This implies the existence of $n_\delta$.

The uniqueness of $n_\delta$, satisfying \eqref{2eq3}, follows 
from its definition \eqref{2eq3}. 

Theorem~\ref{mainthm} is proved. 
\end{proof}

\begin{theorem}
\label{theorem2}
Let $F,f,f_\delta$ and $u_\delta$ be as in Theorem~\ref{mainthm} and 
and $y$ be the minimal-norm solution to the equation $F(u)=f$. 
Let $0<(\delta_m)_{m=1}^\infty$ be a sequence such that 
$\delta_m\to 0$. If the sequence $\{n_{\delta_m}\}_{m=1}^\infty$ 
is bounded, and $\{n_{m_j}\}_{j=1}^\infty$ is a convergent subsequence,
then
\begin{equation}
\label{eqextr5}
\lim_{j\to\infty} u_{n_{m_j}} = u^\star,
\end{equation} 
where $u^\star$ is a solution to the equation $F(u)=f$. 
If 
\begin{equation}
\label{eqextra6}
\lim_{m\to\infty}n_m = \infty,
\end{equation}
and $\zeta\in(0,1)$, then
\begin{equation}
\label{eqextra7}
\lim_{m\to\infty} \|u_{n_m} - y\| = 0.
\end{equation}
\end{theorem}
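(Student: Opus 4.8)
The plan is to treat the two assertions separately. For \eqref{eqextr5}, the boundedness of $\{n_{\delta_m}\}$ forces any convergent subsequence of these integers to be eventually constant, say $n_{m_j}\equiv p$ for all large $j$. For a fixed starting point $u_0$ the first $p$ iterates produced by \eqref{3eq12} depend continuously on the data $f_\delta$, since each step is a continuous map (indeed $F$ is locally Lipschitz: \eqref{ceq2} and Cauchy--Schwarz give $\|F(u)-F(v)\|\le\sigma_R^{-1}\|u-v\|$ on $B(0,R)$). Hence the vectors $u_{n_{m_j}}=u_p$ computed from $f_{\delta_{m_j}}$ converge, as $\delta_{m_j}\to 0$, to the vector $u^\star$ obtained by running $p$ steps of \eqref{3eq12} with the exact data $f$. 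Passing to the limit in the stopping inequality $\|F(u_{n_{m_j}})-f_{\delta_{m_j}}\|\le C\delta_{m_j}^\zeta$, using continuity of $F$ together with $f_{\delta_{m_j}}\to f$, I would obtain $\|F(u^\star)-f\|=0$, so $u^\star$ solves $F(u)=f$, which is \eqref{eqextr5}.

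For \eqref{eqextra7} I would start from the decomposition
\[
\|u_{n_m}-y\|\le \|u_{n_m}-V_{n_m}\|+\|V_{n_m}-V_{0,n_m}\|+\|V_{0,n_m}-y\|,
\]
where $V_n$ and $V_{0,n}$ solve \eqref{3eq2} with data $f_{\delta_m}$ and $f$, respectively. Since $n_m\to\infty$ gives $a_{n_m}\to 0$, the last term tends to $0$ by Lemma~\ref{rejectedlem}, the middle term is bounded by $\delta_m/a_{n_m}$ by \eqref{rejected11}, and the first is at most $\psi_{n_m}/a_{n_m}$ by the first inequality in \eqref{1eq16}. Thus everything reduces to the two limits $\delta_m/a_{n_m}\to 0$ and $\psi_{n_m}/a_{n_m}\to 0$.

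The crux is $\delta_m/a_{n_m}\to 0$, and this is where $\zeta<1$ enters. Here I would use the lower bound of the discrepancy principle at the index $n_m-1$, $C\delta_m^\zeta<\|F(u_{n_m-1})-f_{\delta_m}\|$, estimate the right side by $\|F(V_{n_m-1})-f_{\delta_m}\|+\|F(u_{n_m-1})-F(V_{n_m-1})\|$, bound the first piece through $\|F(V_n)-f_\delta\|=a_n\|V_n\|\le a_n\|y\|+\delta$ (from \eqref{2eq1}), and bound the second piece by \eqref{eqj5} after inserting $\|V_i\|\le\|y\|+\delta/a_i$. Writing $P_n:=e^{-\phi_{n-1}}\sum_{i=0}^{n-1}e^{\phi_i}(a_i-a_{i+1})$ and $Q_n:=e^{-\phi_{n-1}}\sum_{i=0}^{n-1}e^{\phi_i}(a_i-a_{i+1})/a_i$, this yields
\[
C\delta_m^\zeta< a_{n_m-1}\|y\|+O(\delta_m)+\psi_0 e^{-\phi_{n_m-2}}+(\|y\|K+w_0)P_{n_m-1}+\delta_m K\,Q_{n_m-1}.
\]
The decisive quantitative inputs are $e^{-\phi_{n-1}}=o(a_n)$ and $P_n=o(a_n)$, which follow from \eqref{eql6} and \eqref{eql9} of Lemma~\ref{lemma11}, whereas $Q_n$ is bounded and in fact $\to 0$ by Lemma~\ref{lemma10} and \eqref{eqff3}. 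Consequently every term on the right except $a_{n_m-1}\|y\|$ is either $o(a_{n_m-1})$ or $O(\delta_m)=o(\delta_m^\zeta)$, so after dividing I would conclude that $\delta_m^\zeta/a_{n_m-1}$ stays bounded (and $\to 0$ when $y=0$). Since $a_{n_m}/a_{n_m-1}\to 1$ by \eqref{eqvv}, this gives $\delta_m/a_{n_m}=\delta_m^{1-\zeta}(\delta_m^\zeta/a_{n_m})\to 0$, using $\zeta<1$. Feeding $\delta_m/a_{n_m}\to 0$ back into \eqref{1eq17} divided by $a_{n_m}$, and again invoking $e^{-\phi_{n-1}}/a_n\to 0$, $P_n/a_n\to 0$ and boundedness of $Q_n$, yields $\psi_{n_m}/a_{n_m}\to 0$. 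Combining the three estimates in the decomposition proves \eqref{eqextra7}.

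I expect the main obstacle to be precisely the lower-bound step forcing $\delta_m/a_{n_m}\to 0$: it needs the delicate comparison of the weighted sums $P_n$ and $Q_n$ against $a_n$ (Lemmas~\ref{lemma10} and \ref{lemma11}) and careful bookkeeping of which error terms are negligible relative to $\delta_m^\zeta$, which is exactly the point where the hypothesis $\zeta<1$ is indispensable.
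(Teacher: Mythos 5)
Your proposal is correct and follows essentially the same route as the paper: the eventually-constant subsequence plus continuity argument for \eqref{eqextr5}, and for \eqref{eqextra7} the same three-term decomposition through $V_{n_m}$ and $V_{0,n_m}$, the same use of the discrepancy lower bound at $n_\delta-1$ together with Lemma~\ref{lemma11} to force $\delta_m^{\zeta}/a_{n_m}$ bounded and hence $\delta_m/a_{n_m}\to 0$ via $\zeta<1$, and Lemma~\ref{rejectedlem} for the final term. The only (immaterial) difference is bookkeeping: you substitute $\|V_i\|\le\|y\|+\delta/a_i$ termwise into the weighted sum and control the extra piece with Lemma~\ref{lemma10}, whereas the paper first derives the uniform bound $\|V_n\|\le 3\|y\|$ from \eqref{eql11} and then applies \eqref{eql9}.
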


\begin{proof}
Let us first prove \eqref{eqextra7} assuming that \eqref{eqextra6} holds. 
For simplicity we will prove that
\begin{equation}
\label{extra7}
\lim_{\delta\to 0} \|u_\delta - y\| = 0,
\end{equation}
under the assumption that
\begin{equation}
\label{extra6}
\lim_{\delta\to 0}n_\delta = \infty,
\end{equation}

From \eqref{extra6}, Lemma~\ref{lemma11}, and 
the fact that the sequence $(\|V_n\|)_{n=0}^\infty$ is increasing, one gets the following inequalities for sufficiently small $\delta>0$:
\begin{equation}
\label{eqextra8}
\psi_0 e^{-\phi_{n_\delta-2}}\le 
\frac{1}{4}a_{n_\delta-1}\|V_0\|
<\frac{1}{4}a_{n_\delta-1}\|V_{n_\delta-1}\|,
\end{equation}
and 
\begin{equation}
\label{eqextra9}
e^{-\phi_{n_\delta - 2}}\sum_{i=0}^{n_\delta-2} 
e^{\phi_{i}}(a_i -a_{i+1})(\|V_i\|+K) \le \frac{1}{2}a_{n_\delta-1}\|V_{n_\delta-1}\|. 
\end{equation}

From \eqref{2eq3}, and \eqref{inveqr9} with $n=n_\delta-1$, 
\eqref{eqextra8} and \eqref{eqextra9}, one obtains
\begin{equation}
\label{eql10}
C \delta^\zeta < a_{n_\delta-1}\|V_{n_\delta-1}\| (1+\frac{1}{2}+\frac{1}{4})
\le \frac{7}{4}\bigg{(}a_{n_\delta-1}\|y\| + \delta\bigg{)},
\end{equation}
for all $0<\delta$ sufficiently small. 
This and the relation $\lim_{\delta\to0}\frac{\delta^\zeta}{\delta} =\infty$
for a fixed $\zeta\in(0,1)$ imply
\begin{equation}
\label{eql11}
\limsup_{\delta\to 0}\frac{\delta^\zeta}{a_{n_\delta}} \le \frac{7\|y\|}{4C}<\frac{2\|y\|}{C}.
\end{equation}
Inequalities \eqref{eql11}, $\delta<C\delta^\zeta$, and \eqref{2eq1} imply, for sufficiently small $\delta>0$,
the following inequality
\begin{equation}
\label{eql12}
\|V_n\| \le \|y\|+\frac{\delta}{a_{n_\delta}} < \tilde{C}:=\|y\|+ 2\|y\|,\qquad 
0\le n\le n_\delta. 
\end{equation}
Using estimate \eqref{eql12}, one obtains:
\begin{equation}
\label{eql13}
\lim_{\delta\to 0} \frac{\sum_0^{n_\delta-1} e^{\phi_{i}}(a_i - a_{i+1})\|V_i\|}
{e^{\phi_{n_\delta-1}}a_{n_\delta}} \le
\tilde{C}
\lim_{\delta\to 0}
\frac{\sum_0^{n_\delta-1} e^{\phi_{i}}(a_i - a_{i+1})}
{e^{\phi_{n_\delta-1}}a_{n_\delta}}.
\end{equation}

It follows from \eqref{eql9} and \eqref{eql13} that
\begin{equation}
\label{eql15}
\lim_{\delta\to 0} \frac{\sum_{i=0}^{n_\delta-1} e^{\phi_{i}}(a_i - a_{i+1})\|V_i\|}
{e^{\phi_{n_\delta-1}}a_{n_\delta}} = 0.
\end{equation}

From \eqref{1eq17} and \eqref{1eq16} one gets
\begin{equation}
\label{eq123}
\|u_n - V_n\| \le \frac{e^{-\phi_{n-1}}\psi_0}{a_n} + 
\frac{e^{-\phi_{n-1}}}{a_n} \sum_{i=0}^{n-1} e^{-\phi_{i}}(a_i - a_{i+1})\big{(}\|V_i\|K + w_0\big{)}.
\end{equation}
This, \eqref{extra6}, and \eqref{eql15} one obtains:
\begin{equation}
\label{eqj6}
\lim_{\delta\to 0} \|u_{n_\delta} - V_{n_\delta}\| = 0.
\end{equation}
It follows from \eqref{eql11} that
\begin{equation}
\label{eqo11}
\lim_{\delta\to 0} \frac{\delta}{a_{n_\delta}} = 0.
\end{equation}

From the triangle inequality and inequality 
\eqref{rejected11} one obtains:
\begin{equation}
\label{eqhic56}
\begin{split}
\|u_{n_\delta} - y\| &\le \|u_{n_\delta} - V_{n_\delta}\| + 
\|V_{n_\delta} - V_{0,n_\delta}\| + \|V_{0,n_\delta} - y\|\\
&\le \|u_{n_\delta} - V_{n_\delta}\| + \frac{\delta}{a_{n_\delta}} + \|V_{0,n_\delta}-y\|.
\end{split}
\end{equation}
Note that $V_{0,n_\delta} = \tilde{V}_{a(n_\delta),0}$ (cf. \eqref{2eq2}). 
From \eqref{eqj6}--\eqref{eqhic56}, \eqref{extra6}, and Lemma~\ref{rejectedlem}, one obtains
\eqref{extra6}. 

{\it Let us prove \eqref{eqextr5}.}

If $n>0$ is fixed, then $u_n$ is a continuous function of $f_\delta$. 
Denote 
\begin{equation}
\label{eqextra10}
u^\star := u_N^\star:=\lim_{j\to\infty} u_{n_{\delta_{m_j}}},
\end{equation}
where
\begin{equation}
\label{eqextra11}
\lim_{j\to\infty} n_{m_j} = N.
\end{equation}
From \eqref{eqextra10} and the continuity of $F$, one obtains:
\begin{equation}
\label{eqextra12}
\|F(u^\star) - f\| = \lim_{j\to\infty}\|F(u_{n_{\delta_{m_j}}}) - f_{\delta_{m_j}}\|
\le \lim_{j\to\infty}C\delta_{m_j}^\zeta = 0.
\end{equation}
Thus, $u^\star$ is a solution to the equation $F(u)=f$, and \eqref{eqextr5} is proved. 

Theorem~\ref{theorem2} is proved.
\end{proof}

Let us assume in addition that $a(t)$ satisfies the following 
inequalities
\begin{equation}
\label{eqxtr4}
2\ge a(0),\qquad \nu(0)=\frac{|\dot{a}(0)|}{a^2(0)}\le \frac{1}{10}. 
\end{equation}

\begin{remark}
\label{lilo}
Let $b\in(0,1)$, $c\ge5$, $d>0$ and 
$$
a(t)=\frac{d}{(c+t)^b},
\qquad \frac{10b}{c^{1-b}}\le d\le 2c^b.
$$
Then $a(t)$ satisfies \eqref{eqsxa} and \eqref{eqxtr4}.
\end{remark}

We have the following result

\begin{theorem}
\label{theorem3}
Let $a(t)$ satisfy \eqref{eqsxa} and \eqref{eqxtr4}.
Let $F,f,f_\delta$ and $u_\delta$ be as in Theorem~\ref{mainthm}. Assume that $u_0$ satisfies either
\begin{equation}
\label{eqextr2}
\psi_0=\|F(u_0)+a_0u_0 - f_\delta\| \le \theta \delta^\zeta,\qquad 0<\theta<C,
\end{equation}
or
\begin{equation}
\label{eqextr1}
\|F(u_0) + a_0u_0 -f_\delta\| \le \frac{1}{8}a_0\|V_0\|.
\end{equation}
Assume $\zeta\in(0,1)$. 
Then
\begin{equation}
\label{eqextr3}
\lim_{\delta\to 0}n_\delta = \infty. 
\end{equation}
\end{theorem}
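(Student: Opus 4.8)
The plan is to derive \eqref{eqextr3} by contradiction, after first establishing a single a priori estimate on the auxiliary quantity $\psi_n=\|F(u_n)+a_nu_n-f_\delta\|$, namely
\begin{equation*}
\psi_n\le \frac14 a_n\|V_n\|,\qquad 0\le n\le n_\delta,
\end{equation*}
valid for all sufficiently small $\delta>0$. Granting this estimate, the contradiction is short: if \eqref{eqextr3} fails then along some sequence $\delta_m\to 0$ the integers $n_{\delta_m}$ stay bounded, so after passing to a subsequence $n_{\delta_m}=\bar n$ is constant, and at the fixed index $\bar n$ the estimate combined with \eqref{eq*} and \eqref{1eq16} gives $\|F(u_{\bar n})-f_{\delta_m}\|\ge a_{\bar n}\|V_{\bar n}\|-\psi_{\bar n}\ge \frac34 a_{\bar n}\|V_{\bar n}\|$, which cannot stay above the positive limit of its right-hand side while the stopping rule \eqref{2eq3} forces it below $C\delta_m^\zeta\to 0$. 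So the real work is the a priori estimate, which I would prove by induction on $n$.

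For the base case $n=0$ I would treat the two hypotheses separately. Under \eqref{eqextr1} it is immediate, since $\psi_0\le\frac18 a_0\|V_0\|\le\frac14 a_0\|V_0\|$. Under \eqref{eqextr2} I would use that $a_0\|V_0\|=\|F(V_0)-f_\delta\|\to\|F(V_{0,0})-f\|=a_0\|V_{0,0}\|>0$ as $\delta\to 0$, by \eqref{rejected11}, whereas $\psi_0\le\theta\delta^\zeta\to 0$; hence $\psi_0\le\frac14 a_0\|V_0\|$ for all small $\delta$. Note both base cases hold only for sufficiently small $\delta$ in the case \eqref{eqextr2}, which is all that \eqref{eqextr3} requires.

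For the inductive step I assume $\psi_n\le\frac14 a_n\|V_n\|$. Then \eqref{1eq16} gives $\|u_n\|\le\|V_n\|+\psi_n/a_n\le\frac54\|V_n\|$, and plugging this together with $1-\gamma_na_{n+1}\le 1-ha_{n+1}$ (from \eqref{eqk1}) into the recursion \eqref{eqpp11} yields
\begin{equation*}
\psi_{n+1}\le \Big[(1-ha_{n+1})\frac14 a_n+\frac54(a_n-a_{n+1})\Big]\|V_n\|.
\end{equation*}
Since $\|V_{n+1}\|\ge\|V_n\|$ by Lemma~\ref{lemma5}, the target $\psi_{n+1}\le\frac14 a_{n+1}\|V_{n+1}\|$ reduces to $(1-ha_{n+1})\frac14 a_n+\frac54(a_n-a_{n+1})\le\frac14 a_{n+1}$, which a direct rearrangement shows is equivalent to
\begin{equation*}
\frac{a_n-a_{n+1}}{a_na_{n+1}}\le\frac h6.
\end{equation*}
This holds because $\frac{a_n-a_{n+1}}{a_na_{n+1}}\le h\nu(0)\le\frac{h}{10}<\frac h6$ by the computation in \eqref{eqrj6} together with \eqref{eqxtr4}. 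The recursion \eqref{eqpp11} is valid for $0\le n\le N-1$, and since $n_\delta\le N$ (shown in Theorem~\ref{mainthm}) the induction closes on the full range $0\le n\le n_\delta$.

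Finally I would fill in the contradiction: on the one hand \eqref{2eq3} gives $\|F(u_{\bar n})-f_{\delta_m}\|\le C\delta_m^\zeta\to 0$; on the other, letting $m\to\infty$ in $\frac34 a_{\bar n}\|V_{\delta_m,\bar n}\|\le\|F(u_{\bar n})-f_{\delta_m}\|$ and using $\|V_{\delta_m,\bar n}-V_{0,\bar n}\|\le\delta_m/a_{\bar n}\to 0$ from \eqref{rejected11}, the left-hand side converges to the fixed number $\frac34 a_{\bar n}\|V_{0,\bar n}\|$. The main obstacle, and the one place where a standing nondegeneracy assumption is needed, is ensuring $a_{\bar n}\|V_{0,\bar n}\|>0$, i.e. that the minimal-norm solution is not $0$; this is exactly where $\|F(0)-f_\delta\|>0$ enters, giving $\|V_{0,0}\|>0$ and hence, by monotonicity of $\|V_{0,n}\|$, that $\|V_{0,\bar n}\|>0$. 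The other delicate point is purely the constant bookkeeping in the inductive step, which is precisely why the quantitative restriction \eqref{eqxtr4} (with its factor $\frac{1}{10}$) is imposed.
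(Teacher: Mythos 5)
Your proof is correct, but it reaches the key estimate by a genuinely different route than the paper. The paper unrolls the recursion \eqref{eqpp11} into the weighted sum \eqref{eqpia7} and then invokes the technical Lemma~\ref{lemauxi2} (to bound the sum by $\tfrac12 a_n\|V_n\|$) together with the separate chain \eqref{26eq5}--\eqref{1eq20} (to bound the $e^{-\varphi_n}\psi_0$ term by $\tfrac14 a_n\|V_n\|$), arriving at $\|F(u_{n_\delta})-f_\delta\|\ge \tfrac14 a_{n_\delta}\|V_{n_\delta}\|$. You instead propagate the single invariant $\psi_n\le\tfrac14 a_n\|V_n\|$ by induction, closing the step with the elementary inequality $\frac{a_n-a_{n+1}}{a_na_{n+1}}\le h\nu(0)\le\frac{h}{10}<\frac{h}{6}$; your algebra here checks out, and the monotonicity of $\|V_n\|$ from Lemma~\ref{lemma5} is used exactly where needed. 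This bypasses Lemma~\ref{lemauxi2} and the $e^{\varphi_n}$ bookkeeping entirely, at the cost of nothing (you even only use the $\nu(0)\le\tfrac1{10}$ half of \eqref{eqxtr4}), and it yields the slightly sharper lower bound $\tfrac34 a_{n_\delta}\|V_{n_\delta}\|$. A further small contribution is that you actually carry out the base case under hypothesis \eqref{eqextr2}, which the paper dismisses with ``the proof follows similarly''; your limiting argument $a_0\|V_{\delta,0}\|\to a_0\|V_{0,0}\|>0$ via \eqref{rejected11} is the right way to do it. The endgame is essentially the paper's: both reduce to $a_{n_\delta}\|V_{n_\delta}\|\to 0$ and the positivity of $\|V_{0,0}\|$ (your constant-subsequence contradiction versus the paper's direct conclusion $a_{n_\delta}\to 0$ are interchangeable), and the nondegeneracy $F(0)\ne f$ that you flag is relied upon, equally implicitly, by the paper itself in the line ``$\|V_{0,n_\delta}\|\ge\|V_{0,0}\|>0$''.
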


\begin{remark}
\label{remark3.8}
{\rm
The element $u_0$ satisfying \eqref{eqextr2} can be obtained easily by the following 
fixed point iterations:
\begin{equation}
\label{eqkkk3}
v_{n+1} = v_n - \gamma(F(v_n) + a_0 v_n - f_\delta),\qquad n\ge 0,
\end{equation}
where $v_0\in B(0,R)$, $0<R$ is sufficiently large, and $\gamma$ is chosen so that
\begin{equation}
\label{eqkkk4}
0 < \gamma < \frac{2}{\sigma_R^{-1} + 2a_0}.  
\end{equation}
Note that the operator $G(v):=v - \gamma(F(v) + a_0 v - f_\delta)$ is a contraction 
map by Lemma~\ref{lemma2}.

Inequality \eqref{eqextr1} is a sufficient condition for 
the following inequality to hold (see also \eqref{1eq20} below)
\begin{equation}
\label{eqj2}
e^{-\varphi_n}\psi_0\le \frac{1}{8}a_n\|V_n\|,\qquad t\ge 0. 
\end{equation}
By similar arguments as in the proof of Lemma~\ref{lemma11} one can prove that 
$$
\lim_{n\to \infty}e^{\varphi_{n}}a_{n}= \infty.
$$
In the proof of Theorem~\ref{theorem3} inequality \eqref{eqj2} (or \eqref{1eq20})
is used at $n=n_\delta$.
The stopping time $n_\delta$ is often sufficiently large for 
the quantity $e^{\varphi_{n_\delta}}a_{n_\delta}$ to be large. 
In this case  
inequality \eqref{1eq20} with $n=n_\delta$ is satisfied for a wide range of 
$u_0$. 

It is an {\it open problem} to choose $\zeta$ (see \eqref{eqj1}) which 
is optimal in some sense. In practice it is {\it natural} to choose $C$ 
and $\zeta$ so that $C\delta^\zeta$ is close to $\delta$. It is because  
if $v$ is a solution to the equation $F(u)=f$, then $\|F(v)-f_\delta\| = \|f-f_\delta\|\le \delta$.
}
\end{remark}

Let us now prove Theorem~\ref{theorem3}.

\begin{proof}
[Proof of Theorem~\ref{theorem3}]

Let us prove \eqref{eqextr3} assuming that \eqref{eqextr1} holds. 
When \eqref{eqextr2} holds, instead of \eqref{eqextr1}, the proof follows similarly. 

It follows from \eqref{eqpp11}, the triangle inequality, and \eqref{1eq16} that
\begin{equation}
\label{eqpia4}
\begin{split}
\psi_{n+1} &\le (1-ha_{n+1})\psi_n + (a_n-a_{n+1})\|u_n - V_n\| + (a_n-a_{n+1})\|V_n\|\\ 
&\le (1-ha_{n+1})\psi_n + \frac{a_n-a_{n+1}}{a_n} \psi_n + (a_n-a_{n+1})\|V_n\|,
\end{split}
\end{equation}
for $0\le n\le n_\delta -1$. 
From \eqref{eqz8} and \eqref{eqxtr4} one gets
\begin{equation}
\label{eqpia5}
1 - ha_{n+1} + \frac{a_n - a_{n+1}}{a_n} \le 1 - \frac{h a_{n+1}}{2}\le e^{-\frac{h a_{n+1}}{2}}. 
\end{equation}
From \eqref{eqpia4} and \eqref{eqpia5} one obtains
\begin{equation}
\label{eqpia6}
\psi_{n+1} \le e^{-\frac{h a_{n+1}}{2}}\psi_n + (a_n-a_{n+1})\|V_n\|.
\end{equation}
This implies
\begin{equation}
\label{eqpia7}
\psi_{n} \le e^{-\varphi_n}\psi_0 + e^{-\varphi_n}\sum_{i=0}^{n-1}e^{-\varphi_i}(a_i-a_{i+1})\|V_i\|,\qquad 
1\le n\le n_\delta.
\end{equation}
where
\begin{equation}
\label{eqklj1}
\varphi_n = \sum_{i=1}^n \frac{h a_n}{2}.
\end{equation}

It follows from the triangle inequality, \eqref{eq*}, \eqref{1eq16}, and \eqref{eqpia7} that
\begin{equation}
\label{1eq18}
\begin{split}
\|F(u_n)-f_\delta\|&\ge \|F(V_n)-f_\delta\|-\|F(V_n)-F(u_n)\|\\
&\ge a_n\|V_n\| - \psi_0e^{-\varphi_n} - 
e^{-\varphi_n}\sum_{i=0}^{n-1} e^{\varphi_{i+1}} (a_i -a_{i+1})  \|V_i\|.
\end{split}
\end{equation} 
From Lemma~\ref{lemauxi2} one obtains
\begin{equation}
\label{1eq19}
\frac{1}{2}a_n\|V_n\| \ge  e^{-\varphi_n}\sum_{i=0}^{n-1}e^{\varphi_{i+1}}(a_i -a_{i+1}) \|V_i\|.
\end{equation}

From the relation $\psi_n=\|F(u_n)+a_nu_n -f_\delta\|$ (cf. \eqref{xex1}) and \eqref{eqextr1} one gets
\begin{equation}
\label{23eq6}
\psi_0e^{-\varphi_n}\le \frac{1}{8}a_0\|V_0\|e^{-\varphi_n},\qquad 
n\ge 0.
\end{equation}
It follows from \eqref{eqsxa} that 
\begin{equation}
\label{26eq5}
a_1\le a_{n+1}e^{\varphi_n},\qquad \forall n\ge 0.
\end{equation}
Indeed, inequality $a_1\le a_{n+1}e^{\varphi_n}$ is obviously true for 
$n=0$, and $a_{n+1}e^{\varphi_n}$ is an increasing sequence because
\begin{equation}
\label{eqp1}
\begin{split}
a_{n+1}e^{\varphi_{n}} - a_{n}e^{\varphi_{n-1}}
&= e^{\varphi_{n-1}}(a_{n+1}e^{\frac{h a_n}{2}} - a_n)\\
&\ge e^{\varphi_{n-1}}(a_{n+1}+ \frac{h a_n}{2}a_{n+1} - a_n)\\
&= e^{\varphi_{n-1}} a_n a_{n+1}(\frac{h}{2} - \frac{a_n -a_{n+1}}{a_n a_{n+1}})\ge 0,
\end{split}
\end{equation}
by \eqref{eqz8} and \eqref{eqxtr4}. 
From \eqref{26eq5}, \eqref{eqxtr4} and \eqref{eql2} one gets
\begin{equation}
\label{eqpp6}
e^{-\varphi_n}a_0\le a_{n+1}\frac{a_0}{a_1} < 2 a_{n+1},\qquad n\ge 0.
\end{equation}

Inequalities \eqref{23eq6} and \eqref{eqpp6} imply
\begin{equation}
\label{1eq20}
e^{-\varphi_n}\psi_0 
\le \frac{1}{4} a_{n+1}\|V_{0}\|
\le \frac{1}{4} a_{n}\|V_{n}\|,\quad n\ge 0,
\end{equation}
where we have used the inequality $\|V_{n'}\|\le \|V_n\|$ for $n'\le n$, 
established in Lemma~\ref{lemma5}.
From \eqref{1eq18}, \eqref{1eq19} and \eqref{1eq20}, one gets
$$
\|F(u_{n_\delta})-f_\delta\|\ge a_{n_\delta}\|V_{n_\delta}\| - \frac{1}{4}a_{n_\delta}\|V_{n_\delta}\| - \frac{1}{2}a_{n_\delta}\|V_{n_\delta}\| = \frac{1}{4}a_{n_\delta}\|V_{n_\delta}\|.
$$
This and \eqref{2eq3} imply
$$
C\delta^\zeta \ge \|F(u_{n_\delta})-f_\delta\|\ge \frac{1}{4}a_{n_\delta}\|V_{n_\delta}\|.
$$
Thus,
\begin{equation}
\label{xex3}
\lim_{\delta\to0}a_{n_\delta}\|V_{n_\delta}\|\le 
\lim_{\delta\to0}4C\delta^\zeta = 0.
\end{equation}
From \eqref{rejected11} and the triangle inequality we obtain
\begin{equation}
\label{xex2}
a_{n_\delta}\|V_{0,n_\delta}\| \le a_{n_\delta}\|V_{n_\delta}\| + a_{n_\delta}\|V_{n_\delta}-V_{0,n_\delta}\|
\le  a_{n_\delta}\|V_{n_\delta}\| +\delta.
\end{equation}
This and \eqref{xex3} imply
\begin{equation}
\label{exe4}
\lim_{\delta\to0}a_{n_\delta}\|V_{0,n_\delta}\| = 0.
\end{equation}
Since $\|V_{0,n_\delta}\ge\|V_{0,0}\|>0$, relation \eqref{exe4} implies 
$\lim_{\delta\to0}a_{n_\delta}=0$. 
Since $0<a_n\searrow 0$, it follows that  
\eqref{eqextr3} holds. 

Theorem~\ref{theorem3} is proved. 
\end{proof}

Instead of using iterative scheme \eqref{3eq12} one may use the following iterative scheme
\begin{equation}
\label{3eq12.3}
u_{n+1} =u_n - \gamma_n[F(u_n)+a_n(u_n - \bar{u})-f_\delta],\qquad u_0=\tilde{u}_0,
\end{equation}
where $\bar{u}, \tilde{u}_0\in H$. Denote $\tilde{F}(u):=F(u + \bar{u})$.
If $F$ is a locally $\sigma$-inverse monotone operator then so is $\tilde{F}$.
Using Theorem~\ref{mainthm} with $F:=\tilde{F}$, one gets the following corollary:

\begin{corollary}
\label{cor1}
Let $a(t)$ satisfy \eqref{eqsxa} and \eqref{eqxtr4}. 
Let $0<R =const$ be sufficiently large and $h$ and $\gamma_n$ satisfy \eqref{eqk1}. 
Assume that $F:H\to H$ is a locally $\sigma$-inverse monotone operator,
and $u_0$ is an element of $H$, 
satisfying inequality 
\begin{equation}
\label{xxeqj1}
\|F(u_0)-f_\delta\|>C\delta^\zeta>\delta,
\end{equation}
where $C>0$ and $0<\zeta\le 1$ are constants. 
Assume also that $u_0$ satisfy either
$$
\|F(u_0) + a_0(u_0 - \bar{u}) -f_\delta\| \le \frac{1}{8}a_0\|V_0\|,
$$
or
$$
\|F(u_0) + a_0(u_0 - \bar{u}) -f_\delta\| \le \theta\delta^\gamma,\qquad 0<\theta=const<C.
$$
 Assume that equation $F(u)=f$ has a solution, possibly 
nonunique,
and $z\in B(u_0,R)$ is the solution with minimal distance to $\bar{u}$.
Let $f_\delta$ be such that $\|f_\delta-f\|\le \delta$. 
Let $u_n$ be defined by \eqref{3eq12.3}. 
Then there exists a unique $n_\delta$ such that 
\begin{equation}
\label{xx2eq3}
\|F(u_{n_\delta})-f_\delta\|\le C\delta^\zeta,
\quad \|F(u_n)-f_\delta\|>C\delta^\zeta,\qquad 0\le n < n_\delta,
\end{equation}
where $C$ and $\zeta$ are constants from \eqref{eqj1}. If $\zeta\in (0,1)$ and 
$n_\delta$ satisfies \eqref{2eq3}, then
\begin{equation}
\label{xx2eq4}
\lim_{\delta\to 0} \|u_{n_\delta} - z\|=0.
\end{equation}
\end{corollary}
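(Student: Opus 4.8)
The plan is to reduce Corollary~\ref{cor1} to Theorems~\ref{mainthm}, \ref{theorem2}, and \ref{theorem3} by the change of variables suggested just before the statement, namely the shift $w:=u-\bar u$ together with the shifted operator $\tilde F(u):=F(u+\bar u)$. First I would set $w_n:=u_n-\bar u$ and verify that \eqref{3eq12.3} becomes exactly the unshifted scheme \eqref{3eq12} for $\tilde F$. Substituting $u_n=w_n+\bar u$ into \eqref{3eq12.3} and using $F(u_n)=F(w_n+\bar u)=\tilde F(w_n)$ and $a_n(u_n-\bar u)=a_nw_n$ gives
\begin{equation}
w_{n+1}=w_n-\gamma_n\big[\tilde F(w_n)+a_nw_n-f_\delta\big],\qquad w_0=\tilde u_0-\bar u,
\end{equation}
so the iterates of \eqref{3eq12.3} for $F$ are the shifts of the iterates of \eqref{3eq12} for $\tilde F$.

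Next I would check that $\tilde F$ satisfies all the hypotheses imposed on $F$ in the three theorems. Local $\sigma$-inverse monotonicity of $\tilde F$ follows from that of $F$: for $w_1,w_2\in B(0,R)$ one has $w_i+\bar u\in B(0,R+\|\bar u\|)$, so applying \eqref{ceq2} to $F$ on $B(0,R+\|\bar u\|)$ yields
\begin{equation}
\langle\tilde F(w_1)-\tilde F(w_2),w_1-w_2\rangle\ge\sigma_{R+\|\bar u\|}\|\tilde F(w_1)-\tilde F(w_2)\|^2,
\end{equation}
i.e. $\tilde F$ is locally $\sigma$-inverse monotone with $\tilde\sigma_R=\sigma_{R+\|\bar u\|}$. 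Since $\tilde F(w_n)-f_\delta=F(u_n)-f_\delta$, the discrepancy quantity is unchanged, so \eqref{xxeqj1} is precisely \eqref{eqj1} for $\tilde F$ and $w_0$. Moreover $\tilde F(w_0)+a_0w_0-f_\delta=F(u_0)+a_0(u_0-\bar u)-f_\delta$, so the two alternative assumptions on $u_0$ in the corollary are exactly \eqref{eqextr1} and \eqref{eqextr2} for $\tilde F$ and $w_0$, where now $V_0$ denotes the regularized solution of \eqref{3eq2} for $\tilde F$.

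With these identifications, Theorem~\ref{mainthm} applied to $\tilde F$ gives the existence and uniqueness of the stopping index $n_\delta$ satisfying \eqref{xx2eq3} (which is \eqref{2eq3} rewritten for the unchanged discrepancy), and Theorem~\ref{theorem3} applied to $\tilde F$ gives $\lim_{\delta\to0}n_\delta=\infty$ under \eqref{eqsxa}, \eqref{eqxtr4}, and $\zeta\in(0,1)$. The final step is to identify the limit. The minimal-norm solution of $\tilde F(w)=f$ is the solution $w^\star$ minimizing $\|w\|=\|u-\bar u\|$ over the closed convex solution set, i.e. $w^\star=z-\bar u$, where $z$ is the solution of $F(u)=f$ of minimal distance to $\bar u$. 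Hence Theorem~\ref{theorem2}, applied to $\tilde F$ and combined with $\lim_{\delta\to0}n_\delta=\infty$, yields $\lim_{\delta\to0}\|w_{n_\delta}-w^\star\|=0$; undoing the shift gives \eqref{xx2eq4}.

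The step I expect to require the most care is not an estimate but the bookkeeping in matching hypotheses: verifying that the local monotonicity constant for $\tilde F$ on $B(0,R)$ comes from $F$ on the enlarged ball $B(0,R+\|\bar u\|)$, so that the localization and the choice of $R$, $h$, $\gamma_n$ in \eqref{eqk1} remain consistent; and checking that the regularized equation \eqref{3eq2} for $\tilde F$ has solution $\tilde V_n=V_n-\bar u$ precisely when $V_n$ solves the corresponding shifted regularized equation $F(V_n)+a_n(V_n-\bar u)-f_\delta=0$, so that the quantities $\|V_0\|$ and $\|V_{n_\delta}\|$ appearing in the hypotheses and conclusions refer to the correct (shifted) regularized solutions.
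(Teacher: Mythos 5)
Your proposal is correct and follows essentially the same route as the paper, which likewise reduces the corollary to Theorems~\ref{mainthm}, \ref{theorem2}, and \ref{theorem3} via the shifted operator $\tilde F(u):=F(u+\bar u)$ and the observation that local $\sigma$-inverse monotonicity is preserved under this shift. Your additional bookkeeping (the constant $\sigma_{R+\|\bar u\|}$ for the enlarged ball, the identification of the minimal-norm solution of $\tilde F(w)=f$ with $z-\bar u$, and the correspondence of the regularized solutions) is exactly the detail the paper leaves implicit.
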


\subsection{An algorithm for solving equations with $\sigma$-inverse operators}

Let us formulate an algorithm for solving equations with $\sigma$-inverse operators.

\underline{\textbf{Algorithm 1}}
\begin{enumerate}
\item{Estimate the constant $\sigma=\sigma_R$ in \eqref{ceq2}.}
\item{Choose an $a(t)$ satisfying \eqref{eqsxa}.}
\item{Choose $h = \frac{2}{\sigma^{-1}+2a(0)}$ and $\gamma_n$ to satisfy conditions \eqref{eqk1}.}
\item{Find an initial approximation $u_0$ for $y$ or simply set $u_0 = 0$.}
\item{Compute $u_n$ by formula \eqref{eqqe}, use \eqref{2eq3} to stop the iterations at $n_\delta$ 
and use $u_{n_\delta}$ as an approximate solution to the equation 
$F(u)=f$.}
\end{enumerate}

Theorem~\ref{theorem2} guarantees the convergence of $u_{n_\delta}$, computed by 
{\textbf{Algorithm 1}},  
to, at least, a solution to $F(u)=f$. If the equation $F(u)=f$ has a unique solution, 
then $u_{n_\delta}$ converges to this unique solution. 

If one chooses $a(t)$ to satisfy 
\eqref{eqxtr4} in addition, and $u_0$ to satisfy \eqref{eqextr2} or 
\eqref{eqextr1}, 
then $n_\delta\to \infty$ as $\delta\to 0$ as proved in Theorem~\ref{theorem3}.
Consequently, $u_{n_\delta}$ converges to the minimal-norm solution $y$ 
as stated by Theorem~\ref{theorem2}.

Note that the element $u_0$ satisfying \eqref{eqextr2} can be found from iteration \eqref{eqkkk3}. 
Mover, in practice $n_\delta$ is often large when $\delta$ is sufficiently small. 
Thus, in practice one can also use $u_0=0$ as pointed out in Remark~\ref{remark3.8}. 

{\textbf{Algorithm 1}} can also be implemented for solving equations with locally 
$\sigma$-inverse operators. Since the constant $\sigma_R$ depends on $R$, 
one should choose $R$ sufficiently large so that the sequence $(u_n)_{n=1}^{n_\delta}$ remains in side the ball $B(0,R)$. 
However, if one chooses $R$ too large then $h$ and $\gamma_n$ satisfying  
\eqref{eqk1} are small. 
Consequently, the computation cost will be large. Thus, $R$ should be chosen not too small so that the sequence $(u_n)_{n=1}^{n_\delta}$ remains in side the ball $B(0,R)$ 
and not too large so that the computation cost is not large. 
The choice of $R$ varies from problems to problems. 

\section{Numerical experiments}
  
Let us do a numerical experiment solving nonlinear 
integral equation 
\eqref{aeq1} with 
\begin{equation}
\label{1eq41}
F(u):= B(u)+ \arctan^3(u):=\int_0^1e^{-|x-y|}u(y)dy + \arctan^3(u).
\end{equation}
The operator $B$ is compact in $H=L^2[0,1]$. 
One has
$$
\langle \arctan^3{u}-\arctan^3{v},u-v\rangle = \int_0^1(\arctan^3{u}-\arctan^3{v})(u-v)dx \ge 0,
$$ 
and 
$$
e^{-|x|} = \frac{1}{\pi}\int_{-\infty}^\infty \frac{e^{i\lambda x}}{1+\lambda^2} d\lambda = \sqrt{\frac{2}{\pi}} 
\mathcal{F}^{-1}\bigg{(}\frac{1}{1+\lambda^2} \bigg{)}(x),
$$
where $\mathcal{F}$ denotes the Fourier transform. 
Therefore, $\langle B(u-v),u-v\rangle\ge0$, so 
$$
\langle F(u)-F(v),u-v\rangle\ge0,\qquad \forall u,v\in H.
$$

The Fr\'{e}chet derivative of $F$ is:
\begin{equation}
\label{eq44}
F'(u)h = \frac{3\arctan^2{u}}{1+u^2}h  + 
\int_0^1 e^{-|x-y|}h(y) dy.
\end{equation}
It follows from \eqref{eq44} that $F'$ is selfadjoint and uniformly bounded. Thus, $F$ is a $\sigma$-inverse operator. 
Moreover, one can prove that
$$
\|F'(u)\| \le \sqrt{\frac{2}{\pi}}+\sup_{x\ge 0}\frac{3\arctan^2{x}}{1+x^2}<1 + \sqrt{\frac{2}{\pi}},\qquad \forall u\in H.
$$
This and \eqref{eqsigma} imply that
$$
\sigma_R^{-1} < 1 + \sqrt{\frac{2}{\pi}},\qquad \forall R>0.
$$
Thus, if $a(0)<1-\sqrt{\frac{2}{\pi}}$, then \eqref{eqk1} holds for $\gamma_n=h=1$. 
Therefore, the existence of $n_\delta$ is 
guaranteed with $a_n = \frac{a(0)}{(5+n)^{0.99}}$ and $\gamma_n=1$ by Theorem~\ref{mainthm}. It follows from \eqref{1eq41} that equation $F(u)=f$ has 
not more than one solution for any $f\in H$. Thus if $(\delta_m)_{m=1}^\infty$ is a sequence decaying to $0$ and $n_{\delta_{m_j}}$ is any convergent subsequence of $n_{\delta_m}$, then one gets $u_{n_{\delta_{m_j}}}\to y$, the unique solution to 
$F(u)=f$, by Theorem~\ref{theorem2}. 

If $u(x)$ vanishes on a set of positive Lebesgue's measure, then $F'(u)$
is not boundedly invertible.
If $u\in C[0,1]$ vanishes even at one point $x_0$, then $F'(u)$ is not boundedly invertible in $H$. In this case equation $F(u)=f$ cannot be solved by classical 
methods such as Newton's method or Gauss-Newton method. 

Let us use the iterative process \eqref{3eq12}:
\begin{equation}
\begin{split}
u_{n+1} &= u_n - \gamma_n[F(u_n)+a_nu_n - f_\delta],\\
u_0 &= 0.
\end{split}
\end{equation}
We stop iterations at $n:=n_\delta$ such that the following inequality holds
\begin{equation}
\label{eq53}
\|F(u_{n_\delta}) - f_\delta\| <C \delta^\zeta,\quad
\|F(u_{n}) - f_\delta\|\ge C\delta^\zeta,\quad n<n_\delta ,\quad C>1,\quad \zeta \in(0,1).
\end{equation}
Integrals of the form 
$\int_0^1 e^{-|x-y|}h(y)dy$ in \eqref{1eq41} and \eqref{eq44} are computed by 
using
the trapezoidal rule. The noisy function, used in the test, is
$$
f_\delta(x) = f(x) + \kappa f_{noise}(x),\quad \kappa=\kappa(\delta)>0.
$$
The noise level $\delta$ and the relative noise level are defined by 
$$
\delta = \kappa\| f_{noise}(x)\|,\quad \delta_{rel}:=\frac{\delta}{\|f\|}.
$$
The constant $\kappa$ is computed in such a way that the relative noise level
$\delta_{rel}$ equals to some desired value, i.e.,
$$
\kappa = \frac{\delta}{\| f_{noise}(x)\|}=\frac{\delta_{rel}\|f\|}{\| f_{noise}\|}.
$$
We have used the relative noise level as an input parameter in the test.

In all figures the $x$-variable runs through the interval $[0,1]$,  
and the graphs represent the numerical solutions 
$u_{DSM}(x)$ and the exact solution $u_{exact}(x)$. 

As we have proved, the iterative scheme converges 
to the minimal-norm solution when
$a_n=\frac{d}{(5+hn)^b} $, $b\in(0,1)$, $\frac{10b}{5^b}\le d\le 2\times 5^{1-b}$ and $\gamma_n$ are "sufficiently" small. 
The choice of $\gamma_n$ depends on the problem one wants to solve because 
$\gamma_n$ depends on $\sigma_R$ which varies from problems to problems. 
Note that if one chooses $\gamma_n$ to be too small,
then one needs many iterations in order to reach the stopping
time $n_\delta$ in \eqref{eq53}. Consequently, the computation time will
be large in this case. For $\sigma$-inverse problems where the constant 
$\sigma = \sigma_R$ 
can be estimated then it is not difficult to choose $\gamma_n$ satisfying \eqref{eqk1}. 

In the numerical experiments we found that our method works well with $a(0)\in[0.1,1]$. In the
test we chose $a_n$ by the formula $a_n := \frac{a(0)}{(n+5)^{\zeta}}$ 
where $a(0) = 0.1$ and $\zeta=0.99$.
We carried out the experiments with $\gamma_n=h=const\in (0,1]$, and the method works well with this choice of $\gamma_n$.
If one chooses $h>0$ too small, then it takes more computer time for the method to converge.  
The number of node points, used in
computing integrals \eqref{1eq41} and \eqref{eq44}, was $N = 100$. 
In all the experiments, the exact solution is chosen as follows
$$
u_{exact}(x) = 
\left \{
\begin{matrix}
0 &\quad \text{if}\quad x\in[0,0.5)\\
1 &\quad \text{if}\quad x\in(0.5,1].
\end{matrix}
\right .
$$ 
As we have mentioned above, $F'(u)$ is not boundedly invertible in a neighborhood of 
$u_{exact}$. In particular, $F'(u_{exact})$ is not boundedly invertible. 
Thus, one can not use classical methods such as Newton's method or Gauss-Newton 
method to solve for $u_{exact}$. 

Numerical results for various values of $\delta_{rel}$ are presented in Table~\ref{table1}. 
From Table~\ref{table1} one can see that the number of iterations $n_\delta$ tends to 
go to $\infty$ as $\delta$ goes to $0$. Numerical experiments showed that 
$n_\delta\to \infty$ as $\delta\to 0$. 
Note that our choice of $a(t)$ in this experiment does not satisfy 
condition \eqref{lilo} which is a sufficient condition for having $n_\delta\to \infty$ as $\delta\to 0$. 
Table~\ref{table1} shows that the iterative scheme yields good numerical results. 
\begin{table}[ht] 
\caption{Results when $a(0)=0.1$ and $h=1$.}
\label{table1}
\centering
\small
\begin{tabular}{|@{  }c@{\hspace{2mm}}
@{\hspace{2mm}}|c@{\hspace{2mm}}|c@{\hspace{2mm}}|c@{\hspace{2mm}}|c@{\hspace{2mm}}|
c@{\hspace{2mm}}|c@{\hspace{2mm}}|c@{\hspace{2mm}}r@{\hspace{2mm}}l@{}} 
\hline
$\delta_{rel}$       &0.05   &0.03   &0.02    &0.01    &0.003 &0.001\\
\hline
Number of iterations &5	 &6     &8      &13   &39    &104\\
\hline 
$\frac{\|u_{DSM} - u_{exact}\|}{\|u_{exact}\|}$&0.166	 &0.111   &0.108    &0.076   &0.065    &0.045\\
\hline
\end{tabular}
\end{table}

Figure~\ref{figmono} plots the numerical results when relative noise levels are
$\delta_{rel}=0.01$ and $\delta_{rel} = 0.001$. 
The noise function in this example is a 
normally distributed random vector of length $N$ 
with mean 0 and variance 1. Here $N$ is the number of nodal points used in discretizing the interval $[0,1]$.   

\begin{figure}[!h!tb]
\centerline{%
\includegraphics[scale=0.83]{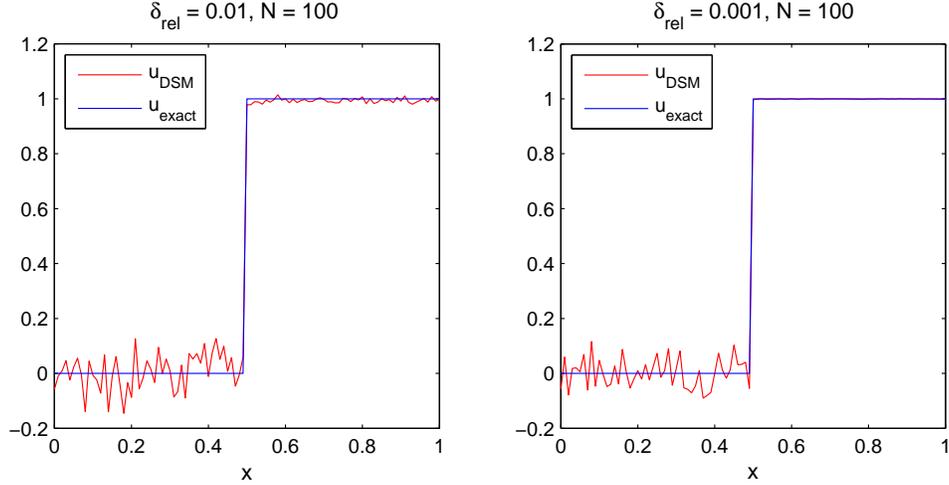}}
\caption{Plots of solutions obtained by the iterative scheme when $N = 100$, 
$\delta_{rel}=0.01$ (left) 
and $\delta_{rel}=0.001$ (right).}
\label{figmono}
\end{figure}

Figure~\ref{figmono2} plots the numerical results when
the noise levels are $\delta_{rel}=0.01$ and $\delta_{rel} = 0.001$. 
In this experiment we choose the 
noise function by the formula $f_{noise}(x)=\sin(3\pi x),x\in[0,1]$. 

\begin{figure}[!h!tb]
\centerline{%
\includegraphics[scale=0.83]{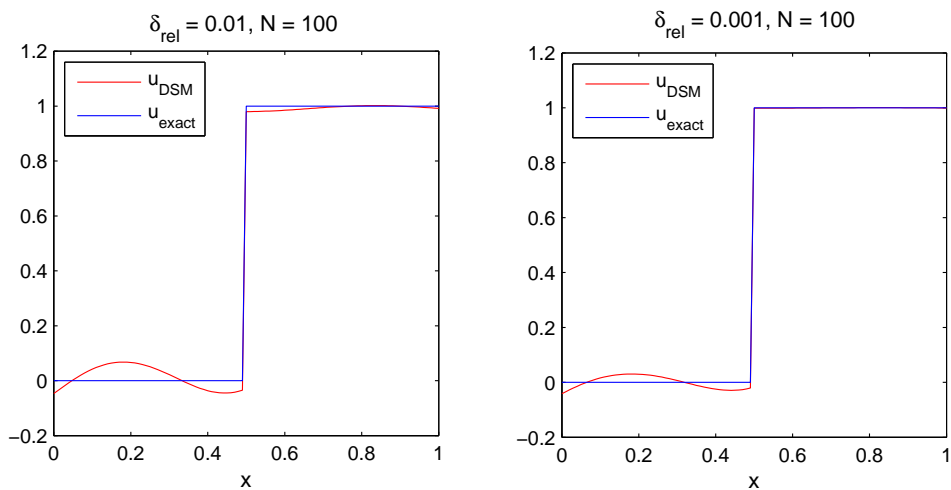}}
\caption{Plots of solutions obtained by the iterative scheme when $N = 100$, 
$\delta_{rel}=0.01$ (left) 
and $\delta_{rel}=0.001$ (right).}
\label{figmono2}
\end{figure}

In computations the functions $u,f$ and $f_\delta$ are vectors in $\mathbb{R}^N$
where $N$ is the number of nodal points. The norm used in computations is the Euclidean length
or $L^2$ norm of 
$\mathbb{R}^N$.

We have also carried out numerical experiments with 
$a_n=\frac{10}{(5+n)^{0.99}}$. For this choice of $a_n$ the convergence 
of $u_{n_\delta}$ to the unique solution of the problem is guaranteed by 
Theorem~\ref{mainthm}--\ref{theorem3}. However, the numerical experiment 
showed that using this choice of $a_n$ does not bring any improvement in 
accuracy while requiring more time for computation. Experiments also 
showed that for this problem it is better to use $a_n = 
\frac{a(0)}{(5+n)^{0.99}}$ with $a(0)\in [0.1,1]$.

From the numerical results we conclude that the proposed stopping rule yields 
good results in this problem.

\end{document}